\newtheorem{theorem}{Theorem}[section]
\newtheorem{lemma}[theorem]{Lemma}
\theoremstyle{definition}
\newtheorem{definition}[theorem]{Definition}
\newtheorem{question}[theorem]{Question}
\newtheorem{note}[theorem]{Notation}
\newcommand{\RR}{\mathbb{R}}
\newcommand{\CC}{\mathbb{C}}
\newcommand{\T}{\rm{tr\;}}
\newcommand{\QQ}{\mathbb{Q}}
\newcommand{\HH}{\mathbb{H}}
\newcommand{\NN}{\mathbb{N}}
\numberwithin{equation}{section}
\begin{document}

\title[Computability Models]{Computability Models: Algebraic, Topological and Geometric Algorithms}









\begin{abstract} 

The discreteness problem for finitely generated subgroups of $PSL(2,\RR)$ and $PSL(2,\CC)$ is  a long-standing open  problem. In this paper we consider whether or not this problem is decidable by an algorithm. Our main result is that the answer depends upon what model of computation is chosen.  Since our discussion involves the disparate topics of computability theory and group theory, we include substantial background material.

\end{abstract}

\maketitle



\section{Introduction}

\vskip .03in

The discreteness problem for finitely generated subgroups of $PSL(2,\RR)$ and $PSL(2,\CC)$ is  a long-standing open  problem. In this paper we consider whether or not this problem  is decidable by an algorithm. Our main result is that the answer depends upon what model of computation is chosen.  Since our discussion involves the disparate topics of computability theory and group theory, we include substantial background material.

\vskip .03in

We ask, {\sl Can one implement a geometric algorithm on a computer?}  A geometric algorithm is  one given by geometric computations in hyperbolic two-space,  $\HH^2$. The answer is more complicated than one might think.
In this paper we address the issue.


\vskip .03in
Our work is motivated by M. Kapovich's paper  {\sl Discreteness is undecidable.} \cite{Kapovich}.
By this abbreviated statement Kapovich means that the set of finitely generated discrete subgroups of  $PSL(2,\CC)$ is not computable in the sense of Blum-Shub-Smale \cite{BSS}.

\vskip .03in

A Blum-Shub-Smale machine, a BSS machine for short,  is one model of computation.



%



\vskip .03in

Here we discuss results in different computational models. These include
{\it bit computability} 
and other models discussed,  along with results about their complexity, in   \cite{GM, GMem, GilmanComplexity, Gshort}.



\vskip .03in

We consider  the various models through the lens of the  two-generator $PSL(2, \RR)$-discreteness question: Namely,
\vskip .03in
\begin{question} \label{question:one} Given two elements $A$ and  $B$ in $PSL(2,\RR)$ is the group they generate, $G = \langle A,B \rangle$, discrete and non-elementary?
\end{question}
\vskip .03in

In  Theorem \ref{theorem:summary}   we show that even though the  Gilman-Maskit \cite{GM} discreteness algorithm establishes decidability  of $PSL(2,\RR)$-discreteness in the geometric model, the  BSS model and the symbolic computation model, the algorithm does not work in the  bit-computation model and not every step of the GM algorithm is computable in this model.

\medskip

This yields our main theorem:

\begin{theorem} \label{theorem:notbothplus}
A problem (resp. a set)  which is decidable (resp. computable)  in one model of computation is not necessarily decidable (resp. computable)   translated to another model of computation.
\end{theorem}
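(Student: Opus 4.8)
The plan is to prove Theorem~\ref{theorem:notbothplus} by exhibiting a single decision problem together with two models in which it behaves differently, and the two-generator $PSL(2,\RR)$-discreteness problem of Question~\ref{question:one} is the natural candidate. Write $\mathcal D\subset PSL(2,\RR)\times PSL(2,\RR)$ for the set of pairs $(A,B)$ such that $\langle A,B\rangle$ is discrete and non-elementary. By Theorem~\ref{theorem:summary} the Gilman--Maskit algorithm decides membership in $\mathcal D$ in the geometric model, the BSS model and the symbolic-computation model, since that algorithm halts after finitely many exact arithmetic (resp.\ algebraic) operations and comparisons on the entries of $A$ and $B$. Thus the entire burden of the proof is the complementary assertion: the natural translation of this problem to the bit-computation model is \emph{not} decidable.

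For that negative half I would use the standard feature of the bit model, that an input real is accessible only through arbitrarily good rational approximations, so that exact order and equality tests between presented reals are not bit-decidable and, equivalently, every total bit-computable function from a subset of a Euclidean space to $\{0,1\}$ is continuous on its domain. It therefore suffices to show that the characteristic function $\chi_{\mathcal D}$ is discontinuous, and this is the point where the geometry enters: the set of discrete subgroups of $PSL(2,\RR)$ is neither open nor closed, and one can make this quantitative for two generators. For instance, take $(A,B)$ generating a hyperbolic triangle group (discrete and non-elementary, with $A,B$ elliptic of finite order) and perturb $A$ within $PSL(2,\RR)$ to elliptic elements $A_n\to A$ whose rotation angle is an irrational multiple of $\pi$; then $\langle A_n\rangle$, hence $\langle A_n,B\rangle$, is non-discrete, so $(A_n,B)\notin\mathcal D$ while $(A,B)\in\mathcal D$. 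Such perturbations occur arbitrarily close to $(A,B)$, so $\chi_{\mathcal D}$ fails to be continuous at $(A,B)$; this is the $PSL(2,\RR)$ shadow of the cusp phenomenon that Kapovich~\cite{Kapovich} exploits in $PSL(2,\CC)$. Consequently no bit-computable procedure can decide membership in $\mathcal D$, and combining this with Theorem~\ref{theorem:summary} gives exactly the statement of Theorem~\ref{theorem:notbothplus}.

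I expect the main obstacle to be the careful treatment of this discontinuity: verifying that the chosen limit group is genuinely discrete and non-elementary, that the approximating groups $\langle A_n,B\rangle$ are genuinely non-discrete (it is the non-discreteness, not the ``non-elementary'' clause, that one needs along the sequence), and that the perturbed pairs can be presented as legitimate inputs in the bit model. The surrounding material --- continuity of bit-computable functions, and the reduction of ``no bit-algorithm exists'' to ``the characteristic function is discontinuous'' --- is routine computable analysis, and the positive side is already supplied by Theorem~\ref{theorem:summary}. An alternative, more hands-on route, closer to the wording of Theorem~\ref{theorem:summary}, is to isolate the specific comparison step of the Gilman--Maskit algorithm --- a trace or displacement inequality that can degenerate to an equality, such as $\mathrm{tr}=\pm 2$ --- and show directly that this step is not bit-computable; I would nonetheless present the topological argument first, because it rules out \emph{every} bit-algorithm at once rather than only the Gilman--Maskit one.
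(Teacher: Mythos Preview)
Your proposal is correct and in fact proves more than the paper does. The paper's own argument for Theorem~\ref{theorem:notbothplus} is simply to invoke Theorem~\ref{theorem:summary}, whose one-line proof is that the GM algorithm contains a branching step that amounts to evaluating a sign function, and sign is not bit-computable because bit-computable functions are continuous. That is exactly the ``alternative, more hands-on route'' you describe in your last paragraph: it shows only that \emph{this particular} algorithm cannot be carried over to the bit model, not that the discreteness set $\mathcal D$ itself is bit-undecidable.

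Your primary argument --- exhibiting a discontinuity of $\chi_{\mathcal D}$ at a triangle-group pair via irrational-angle perturbations of an elliptic generator --- is a genuinely different route. It is stronger, since it rules out \emph{every} oracle Turing machine, and it matches the literal wording of Theorem~\ref{theorem:notbothplus} (which speaks of a \emph{problem} or \emph{set}, not of a specific algorithm). What the paper's approach buys is brevity and self-containment: no auxiliary discrete group needs to be produced or verified, and the argument reduces to a single observation about the sign function already made in Section~\ref{sec:bitcomputability}. What your approach buys is a logically tighter conclusion, closing the gap between ``the GM algorithm fails in the bit model'' and ``the problem is bit-undecidable.'' Since you already flag the paper's route as an alternative, you might simply present both, noting that the topological argument is the one that actually matches the stated theorem.
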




The figures above  illustrate some configurations that the Gilman-Maskit  geometric algorithm might visit as it passes through the steps.

\begin{figure} \label{fig:A}\begin{center}
\includegraphics[width=3.1cm,keepaspectratio=true]{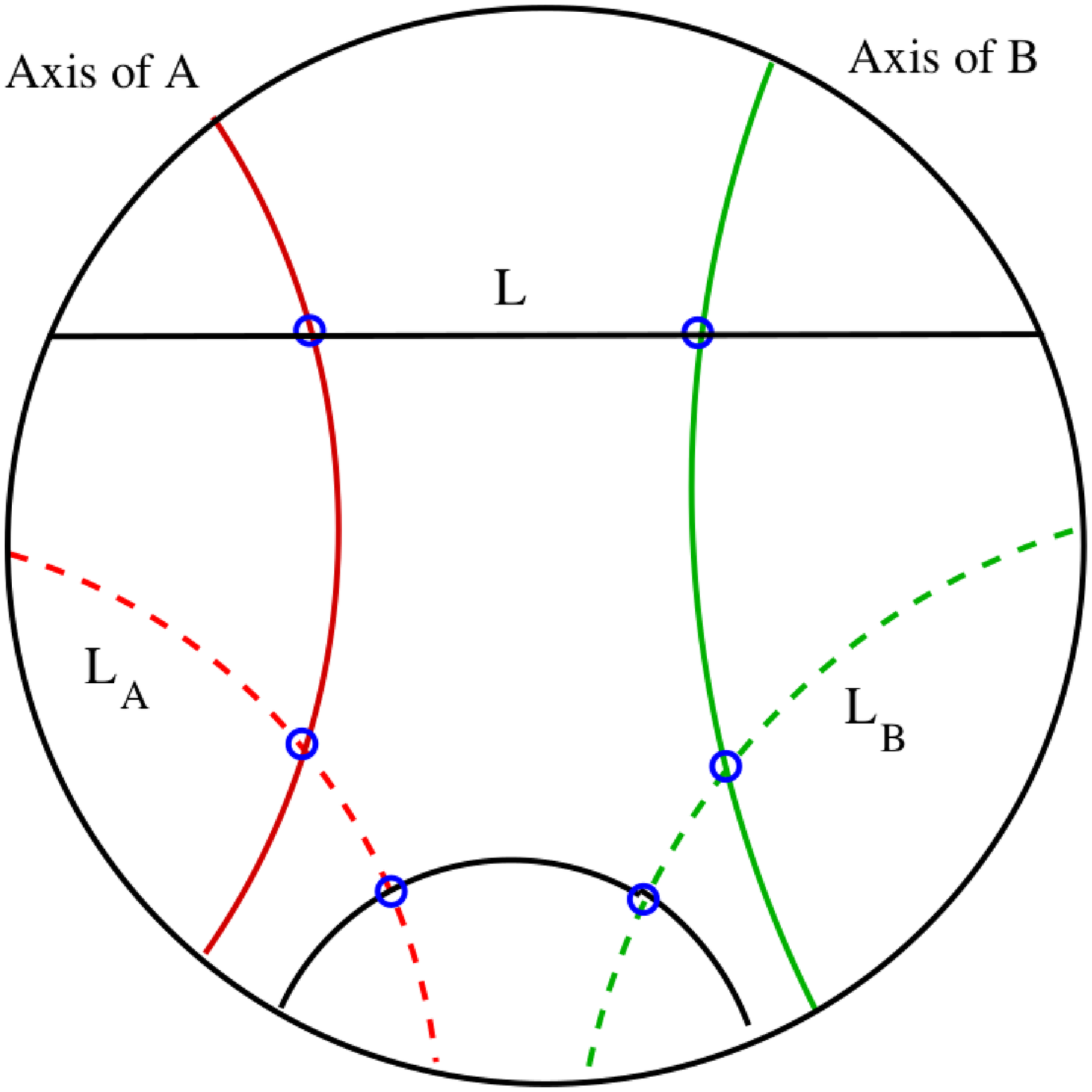}
$\;\;\;$\includegraphics[width=3.0cm,keepaspectratio=true]{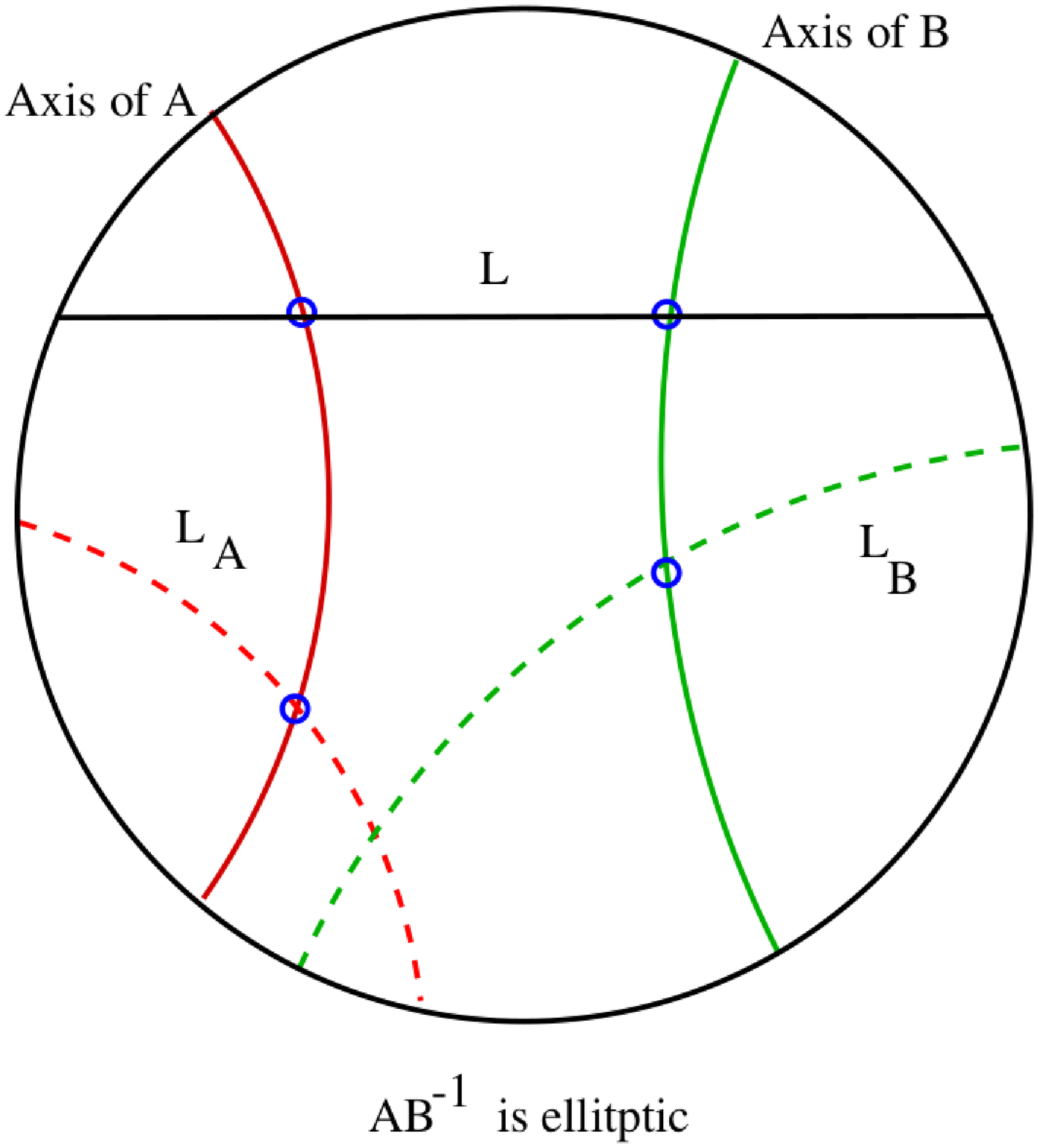}
\includegraphics[width=3.2cm,keepaspectratio=true]{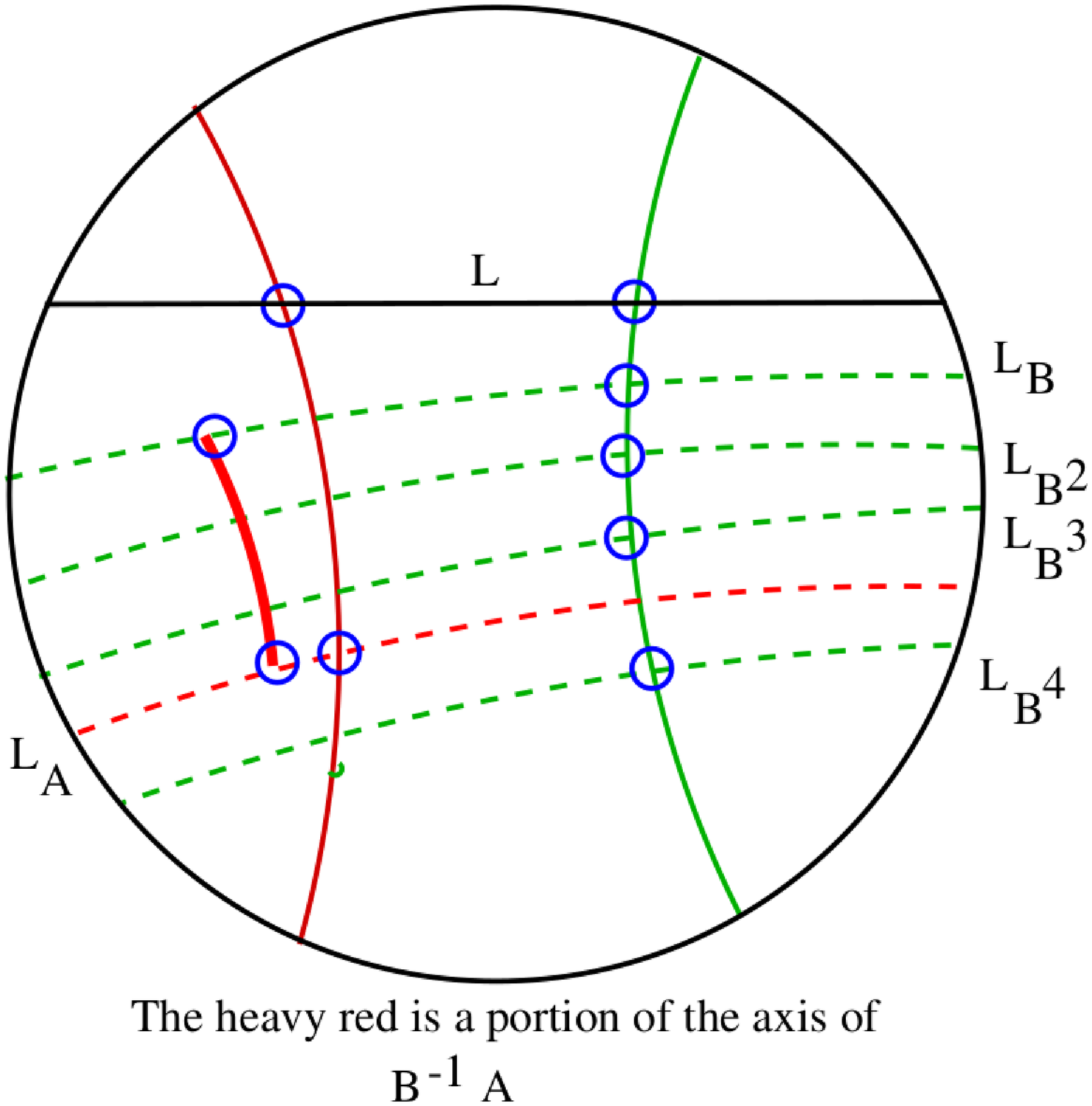}
\end{center}
\caption{Axes and Half-turn Axes Configurations}
\end{figure}

 \medskip

As a specific example, we consider $SL(2, \RR)$-decidability and we show that for  Riemann surfaces of type $(g,n)$, of genus $g$ with $n$ punctures, $T(g,n)$,   the Teichm{\"u}ller  Space,
 and $R(g,n)$ (the  rough fundamental domain for the action of the Teichm{\"u}ller modular group on $T(g,n)$ in the sense  of \cite{keen})  are BSS decidable (Theorem \ref{theorem:Tgn}).




\medskip

The organization of this paper is as follows: Section \ref{sec:background} contains background of both historical and technical nature and preliminary material. It presents  a heuristic definition of an algorithm,  some details of the Gilman-Maskit geometric algorithm and an example of the relation of algebra and geometry. Section \ref{sec:candd} reviews the definitions of computability and decidability.
Section \ref{sec:compmodels} defines what is meant by computation models and these models are discussed in sections
\ref{sec:geomalg}, \ref{sec:BSS}, \ref{sec:symboliccompalgebra}, \ref{sec:bitcomputability}, touching respectively upon geometry, BSS, symbolic computation and bit-computability. For the most part Sections \ref{sec:BSS} and \ref{sec:symboliccompalgebra} contain definitions and summaries of earlier results. The latter are given to place the main theorem in context. The BSS-decidability of $T(g,n)$,  Theorem \ref{theorem:Tgn},   is proved in section \ref{sec:Tspace}. Section \ref{sec:conclusion} summarizes the results.
The last section, Section \ref{sec:questions},  lists some open questions.

\section{Background and Notation}\label{sec:background}

\vskip .03in
Question \ref{question:one} is an old question, one  that mathematicians thought should be easy to solve. As a result there were a number of papers published that had omissions and errors. It turned out that the solution required an algorithm.
\vskip .03in
\subsection{What is an algorithm?} 

 \vskip .03in

  Heuristically one can regard an algorithm as a recipe for solving a problem, a recipe that is composed of allowed simple steps and a recipe that always gives the right answer. Since a recipe that does not stop does not give the right answer, the definition implies that an algorithm always comes to a stopping point.\footnote{Riley's \cite{Riley}
work produces a {\sl procedure} and not an algorithm because it will not necessarily stop}
However, this definition leaves a lot of room for deciding what the allowed simple steps are. Here we distinguished
models of computation in part by consideration of what allowed simple steps for the GM algorithm are included.

\subsection{The Gilman-Maskit algorithm}

\vskip .05in

There is a geometric algorithmic solution to the $PSL(2,\RR)$ discreteness problem due Gilman and Maskit \cite{GM} .

The Gilman-Maskit algorithm, the GM algorithm for short, consists of
easy geometric and computational steps and stops in finite time with an explicit  bound.

Pull back to $(\tilde{A},\tilde{B}) \in SL(2,\RR)$ with $\T >0$, $\det =1$ and
let $T$ be the maximal initial trace. That is

$T = max\{ |\T \; \tilde{A}|, |\T \; \tilde{B}|, |\T \; \tilde{A}\tilde{B}|, |\T \; \tilde{A}\tilde{B}^{-1}|\}$

The implementation of the algorithm
 dovetails tests for J{\o)rgensen 's inequality and hypotheses of the  Poincare Polygon Theorem.

\begin{note} Often in what follows
for ease of exposition we will not distinguish notationally between
$X \in PSL(2,\RR)$ and its pull back to $SL(2,\RR)$.
unless it is not clear where the matrix we are referring to sits. We often fail to distinguish between a matrix and its action as a M{\"o}bius transformation acting on the hyperbolic plane, $\HH^2$.
\end{note}
\medskip
The algorithm proceeds by considering a given pair of M{\"o}bius transformations. There is an order placed on the transformations with hyperbolics harder than parabolics which are in turn harder than elliptics. This induces an order on pairs of transformations. The algorithm considers a pair and either decides that it is not discrete, usually by using J{\o}rgensen's inequality,  or determines that is it discrete, usually by using the Poincare Polygon Theorem. If neither discreteness or non-discreteness is determined, the algorithm produces a next pair to consider.  The next pair of generators comes from the previous pair using a Nielsen transformation that only changes one of the two generators. The next pair may be in an easier case or it may be in the same type of case but the algorithm assures that the same type of case is repeated at most a finite number of times. Thus the algorithm stops and produces an answer after a finite number of pairs is considered.

\subsection{Why emphasize two-generators?}

$\;$
\vskip .05in

A consequence of J{\o}rgensen's inequality is
 \begin{theorem} {\rm (J{\o}rgensen, \cite{Jorg})}
 Let $G$ be a finitely generated subgroup of $PSL(2,\RR)$. $G$ is discrete if and only if   $ \langle A, B \rangle$ is discrete for every pair of $(A,B) \in G$ which generate a non-elementary group.
 \end{theorem}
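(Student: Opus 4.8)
\medskip
\noindent\textbf{Proof proposal.} The plan is to prove the two implications separately. The ``only if'' direction is immediate, since a subgroup of a discrete group is discrete, so if $G$ is discrete then so is $\langle A,B\rangle$ for every pair $A,B\in G$. All the content is in the converse, and the tool I would use is J{\o}rgensen's inequality \cite{Jorg}: if $\langle A,B\rangle\le PSL(2,\CC)$ is discrete and non-elementary, then
\[
\bigl|\operatorname{tr}^2 A-4\bigr|+\bigl|\operatorname{tr}(ABA^{-1}B^{-1})-2\bigr|\ \ge\ 1 .
\]
For the converse I would also assume $G$ is non-elementary; this is the only case in which the stated equivalence can hold, since an elementary group such as the cyclic group generated by an elliptic of infinite order contains no non-elementary two-generator subgroups and yet need not be discrete, so the elementary case is understood to be excluded or disposed of by a direct inspection of the short list of elementary subgroups of $PSL(2,\RR)$.

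So assume $G$ is non-elementary but not discrete, and fix a sequence $\{f_n\}\subset G$ with $f_n\ne\operatorname{id}$ and $f_n\to\operatorname{id}$; note that then $\operatorname{tr}^2 f_n\to 4$. Since $G$ is non-elementary its limit set is infinite and the fixed-point pairs of its hyperbolic elements are dense in it, so $G$ contains three hyperbolic elements $g_1,g_2,g_3$ whose fixed-point pairs $\operatorname{fix}(g_i)\subset\partial\HH^2$ are pairwise disjoint; I fix these once and for all. I claim that for all large $n$, at least one of $\langle f_n,g_1\rangle,\langle f_n,g_2\rangle,\langle f_n,g_3\rangle$ is non-elementary. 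Indeed, if $\langle f_n,g_i\rangle$ were elementary, then, containing the hyperbolic $g_i$, it fixes no point of $\HH^2$ and so preserves setwise a set $F\subseteq\partial\HH^2$ with $1\le|F|\le 2$; since a set of at most two boundary points that is invariant under the hyperbolic element $g_i$ must lie inside $\operatorname{fix}(g_i)$ (a hyperbolic transformation interchanges no pair of points), we get $F\subseteq\operatorname{fix}(g_i)$. Then $f_n$ preserves $F$, and for $n$ large $f_n$ is not a half-turn (as $\operatorname{tr}^2 f_n$ is near $4$, not $0$), so $f_n$ cannot interchange two boundary points and therefore fixes a point of $\operatorname{fix}(g_i)$. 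But a nontrivial M{\"o}bius transformation has at most two fixed points on $\partial\HH^2$, so this cannot happen for all three pairwise-disjoint sets $\operatorname{fix}(g_1),\operatorname{fix}(g_2),\operatorname{fix}(g_3)$, which proves the claim; pick $k_n\in\{g_1,g_2,g_3\}$ with $\langle f_n,k_n\rangle$ non-elementary.

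Now by hypothesis $\langle f_n,k_n\rangle$ is also discrete, so J{\o}rgensen's inequality gives
\[
1\ \le\ \bigl|\operatorname{tr}^2 f_n-4\bigr|+\bigl|\operatorname{tr}(f_n k_n f_n^{-1}k_n^{-1})-2\bigr|
\]
for all large $n$. Letting $n\to\infty$, the first term tends to $0$; and since $k_n$ lies in the finite set $\{g_1,g_2,g_3\}$, the commutator $f_n k_n f_n^{-1}k_n^{-1}$ tends to $\operatorname{id}$, so the second term tends to $0$ as well, contradicting the lower bound $1$. Hence $G$ is discrete. I expect the main obstacle to be the claim in the middle paragraph: producing, arbitrarily near the identity, a nontrivial element of $G$ which, together with one of only finitely many fixed elements of $G$, generates a non-elementary subgroup. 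The finiteness of that reference set is precisely what makes the commutators converge to the identity, and securing it is what forces the analysis of how an element near the identity can share boundary fixed points with the chosen hyperbolics — the one place where the non-elementarity of $G$ is genuinely used.
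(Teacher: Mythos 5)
The paper does not actually prove this statement; it is quoted as a known result of J{\o}rgensen with a citation to \cite{Jorg}, so there is no internal proof to compare against. Your argument is correct and is essentially J{\o}rgensen's original one: the forward direction is trivial, and the converse runs by contradiction through J{\o}rgensen's inequality applied to $\langle f_n,k_n\rangle$ for a sequence $f_n\to\operatorname{id}$ and a reference element $k_n$ drawn from a \emph{fixed finite} set of hyperbolics with pairwise disjoint fixed-point pairs; the three-hyperbolics pigeonhole argument guaranteeing that some $\langle f_n,g_i\rangle$ is non-elementary is the standard device, and your handling of the possibility that $f_n$ swaps a fixed-point pair (excluded for large $n$ since $\operatorname{tr}^2 f_n\to 4\neq 0$) is the right way to close that case. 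Two remarks. First, you are right that the statement as printed is false for elementary $G$ (an infinite-order elliptic generates a non-discrete elementary group for which the right-hand condition holds vacuously), so the non-elementary hypothesis you add is genuinely needed and not merely a convenience; the paper's formulation glosses over this. Second, your proof never uses the hypothesis that $G$ is finitely generated, which is consistent with J{\o}rgensen's theorem holding for arbitrary non-elementary subgroups; the finite-generation clause in the paper's statement is superfluous. The only facts you invoke without proof --- that a non-elementary subgroup of $PSL(2,\RR)$ contains hyperbolic elements with pairwise disjoint fixed-point sets, and J{\o}rgensen's inequality itself --- are standard and correctly stated, so the proposal stands.
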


 This makes two-generator groups especially important.

\subsection{Example: Geometric and Algebraic Equivalence } \label{sec:Gequiv}

\vskip .05in

We are interested in translating a problem from one model of computation to another.
We give an example of a theorem that translates a geometric condition into an algebra condition and vice-versa.
We first note that elements of $ PSL(2,\mathbb{C})$ are classified algebraically by  their traces and  equivalently by the corresponding action on $\HH^3$, hyperbolic three-space, geometric conditions.
We remind the reader of the definition of coherent orientation.
If $A$ and $B$ are hyperbolic transformations with pull backs $\tilde{A}$ and $\tilde{B}$ to $SL(2,\RR)$
${\T {\tilde{A}}} \ge \T {\tilde{B}} > 2$ with  $L$ the common perpendicular to their axes, then $A$ and $B$ are coherently oriented if the attracting fixed points of ${\tilde{A}}$ and ${\tilde{B}}$ lie to the left of $L$ when $L$ is oriented from the axis of ${\tilde{A}}$ towards the axis of ${
\tilde{B}}$ (see Figure 2).\footnote{
Let $R_X$ denote reflection in the hyperbolic geodesic $X$. Given $A$ and $B$ there are hyperbolic
geodesics $L_A$ and $L_B$ such that  $A= R_{L_A}R_L$ and $B= R_{L_B}R_L$ and $AB^{-1} = R_{L_A}R_{L_B}$. The algorithm proceeds by considering the possible configurations for $L$,  $L_A$ and $L_B$.
The reflection geodesics are pictured in some of the figures but are not needed here.}

\begin{figure} \label{fig:B}  \begin{center}

\includegraphics[width=3cm,keepaspectratio=true]{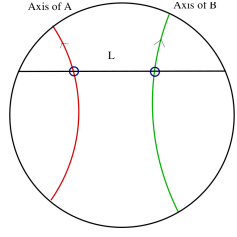}

\caption{Hyperbolics with Coherently Oriented Axes}
\end{center}

\end{figure}
 We take the trace of an element $X
\in PSL(2,\RR)$, $\T X$,   to be the trace of the appropriate pull back.

\medskip

\noindent \begin{theorem} {\rm (Gilman-Maskit \cite{GM} (1997) page 17)}

\vskip .04in

\centerline{\rm The Geometric Meaning of Negative Trace}  

\vskip .05in

\noindent Assume that $\T A \ge \T B > 2$ and that the axes of $A$ and $B$ are coherently oriented so that $\T AB > 2$. Then

\begin{enumerate}
\item $\T AB^{-1} < -2$ $\iff$ the axes of $A$, $B$ and $AB^{-1}$ bound a region,  and
\item $\T AB^{-1} >  \, 2 $ $\iff$  one of the axes of $A$, $B$ and $AB^{-1}$ separates the other two.
 \end{enumerate}

 \end{theorem}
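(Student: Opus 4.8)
The plan is to work entirely in the reflection picture set up in the footnote. Write $A = R_{L_A}R_L$, $B = R_{L_B}R_L$, and $AB^{-1} = R_{L_A}R_{L_B}$, where $L$ is the common perpendicular to the axes of $A$ and $B$, and $L_A$, $L_B$ are the geodesics perpendicular to $L$ through the "midpoints" determined by $A$ and $B$ respectively. Under the hypothesis $\T A \ge \T B > 2$ with coherently oriented axes and $\T AB > 2$, the geodesics $L$, $L_A$, $L_B$ are configured so that $L_A$ and $L_B$ both meet $L$ orthogonally and lie on a common side consistent with coherent orientation. The axis of $A$ is $L$-symmetric and perpendicular to nothing in particular, but crucially: the type of the isometry $R_{L_A}R_{L_B}$ (hyperbolic with trace $<-2$, hyperbolic with trace $>2$, or parabolic/elliptic) is governed entirely by whether $L_A$ and $L_B$ are disjoint (with a common perpendicular), share an endpoint at infinity, or cross.

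First I would record the elementary fact about products of two reflections: $R_{L_A}R_{L_B}$ is hyperbolic precisely when $L_A \cap L_B = \emptyset$ and $L_A, L_B$ have no common endpoint, in which case its axis is the common perpendicular of $L_A$ and $L_B$; and the sign of its trace in $SL(2,\RR)$ is then determined by the relative orientation (equivalently, by which translation length / which lift one picks, but here the lift is pinned down by $\T A, \T B > 2$ and the coherent-orientation normalization that forced $\T AB > 2$). Next I would translate "disjoint with common perpendicular" into the statement about axes: since the axis of $A$ is obtained from $L$ by the half-turn structure around $L_A$, and similarly for $B$ and $AB^{-1}$, disjointness of $L_A$ and $L_B$ corresponds exactly to the three axes of $A$, $B$, $AB^{-1}$ bounding a common region (a right-angled hexagon-type configuration), giving direction (1). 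When instead $L_A$ and $L_B$ cross, $R_{L_A}R_{L_B}$ is elliptic or the configuration degenerates, and the correct reading is that the three axes are "linked" so that one separates the other two — but to land in the stated $\T AB^{-1} > 2$ hyperbolic case one checks that crossing of $L_A, L_B$ together with the trace constraints forces the separating configuration, giving direction (2). The two cases are exhaustive because, under $\T A \ge \T B > 2$ and $\T AB > 2$, the remaining possibility $\T AB^{-1} = \pm 2$ is a boundary case excluded by the strict inequalities, and $-2 < \T AB^{-1} < 2$ would make $AB^{-1}$ elliptic, which one rules out using discreteness-independent trace identities (the Fricke relation $\T A\,\T B\,\T AB - \T A^2 - \T B^2 - \T AB^2 + 2 = \T AB \cdot \T AB^{-1}$ — more precisely $\T AB + \T AB^{-1} = \T A\,\T B$) together with $\T A, \T B > 2$, which pushes $\T AB^{-1} = \T A \T B - \T AB$ outside $(-2,2)$ given the normalizations.

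Actually the cleanest route is to invoke that last trace identity, $\T AB + \T AB^{-1} = \T A \cdot \T B$, from the start: it reduces both biconditionals to a single computation relating the real number $\T AB^{-1}$ to the crossing pattern of $L_A$ and $L_B$, since $\T A, \T B$ and $\T AB$ are already fixed by the normalization. Then I would prove the key geometric lemma: with $L_A \perp L$ and $L_B \perp L$, as the signed distance along $L$ between the feet of $L_A$ and $L_B$ varies, $L_A$ and $L_B$ are disjoint / asymptotic / crossing according to whether that distance exceeds / equals / is less than the sum of the two "half-translation" offsets, and this same threshold is exactly where $\T AB^{-1}$ crosses $-2$; simultaneously track when the axes of $A$, $B$, $AB^{-1}$ (each a geodesic one can write down explicitly from $L$, $L_A$, $L_B$) pass from bounding a region to having one separate the other two.

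The main obstacle I anticipate is bookkeeping the orientations and sign of the trace: "coherently oriented so that $\T AB > 2$" is doing real work in pinning down which of $\pm R_{L_A}R_{L_B}$ one is computing the trace of, and a careless choice flips the sign of $\T AB^{-1}$ and swaps (1) with (2). So the delicate step is to verify, once and carefully, that with the attracting fixed points of $\tilde A$ and $\tilde B$ on the left of the oriented $L$, the reflection-product formula yields the lift of $AB^{-1}$ whose trace is $\T A\T B - \T AB$ rather than its negative, after which (1) and (2) follow from the disjoint-versus-crossing dichotomy for $L_A, L_B$ and the accompanying region-bounding-versus-separating dichotomy for the three axes.
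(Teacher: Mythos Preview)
The paper does not prove this theorem: it is quoted from \cite{GM} as background, with no argument supplied beyond the footnote recording the reflection decomposition $A=R_{L_A}R_L$, $B=R_{L_B}R_L$, $AB^{-1}=R_{L_A}R_{L_B}$. So there is no in--paper proof to compare against; your choice of framework matches what the footnote points to and is indeed the approach of the original source.

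That said, the proposal contains a geometric error that would derail the argument as written. You assert that ``$L_A$ and $L_B$ both meet $L$ orthogonally'' and later analyse ``the signed distance along $L$ between the feet of $L_A$ and $L_B$.'' This is false. From $A=R_{L_A}R_L$ with $A$ hyperbolic, the axis of $A$ is the \emph{common perpendicular} of $L_A$ and $L$; hence $L_A\perp\operatorname{axis}(A)$, not $L_A\perp L$. Indeed $L_A$ and $L$ are ultraparallel (both perpendicular to the axis of $A$), and similarly for $L_B$ and $L$. Consequently your crossing--versus--disjoint dichotomy for $L_A,L_B$ cannot be what separates (1) from (2): in \emph{both} cases $AB^{-1}$ is hyperbolic, so $L_A$ and $L_B$ are disjoint in both, and the distinction between $\T AB^{-1}<-2$ and $\T AB^{-1}>2$ is a sign/orientation phenomenon in $SL(2,\RR)$, not a crossing one. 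You correctly flag at the end that the sign bookkeeping is ``the delicate step,'' but the mechanism you propose to resolve it is the wrong one. (A smaller point: your claim that the identity $\T AB+\T AB^{-1}=\T A\,\T B$ forces $\T AB^{-1}\notin(-2,2)$ under the stated hypotheses is also incorrect, and in any case unnecessary, since the theorem does not assert that (1) and (2) exhaust the possibilities --- the algorithm in Section~\ref{sec:overviewAlgorithm} explicitly branches to parabolic and elliptic cases when $|\T AB^{-1}|\le 2$.)
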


 This is, of course, one of the two main theorems proved in establishing the GM algorithm and showing that it is a true algorithm. The other result is that the procedure replacing  a pair of generators by the appropriate  Nielsen equivalent  pair stops because in the presence of J{\o}rgensen's inequality, there is a positive lower bound by which traces decrease under a Nielsen transformation.
\begin{figure}\label{fig:C} \begin{center}

\includegraphics[width=3.0cm,keepaspectratio=true]{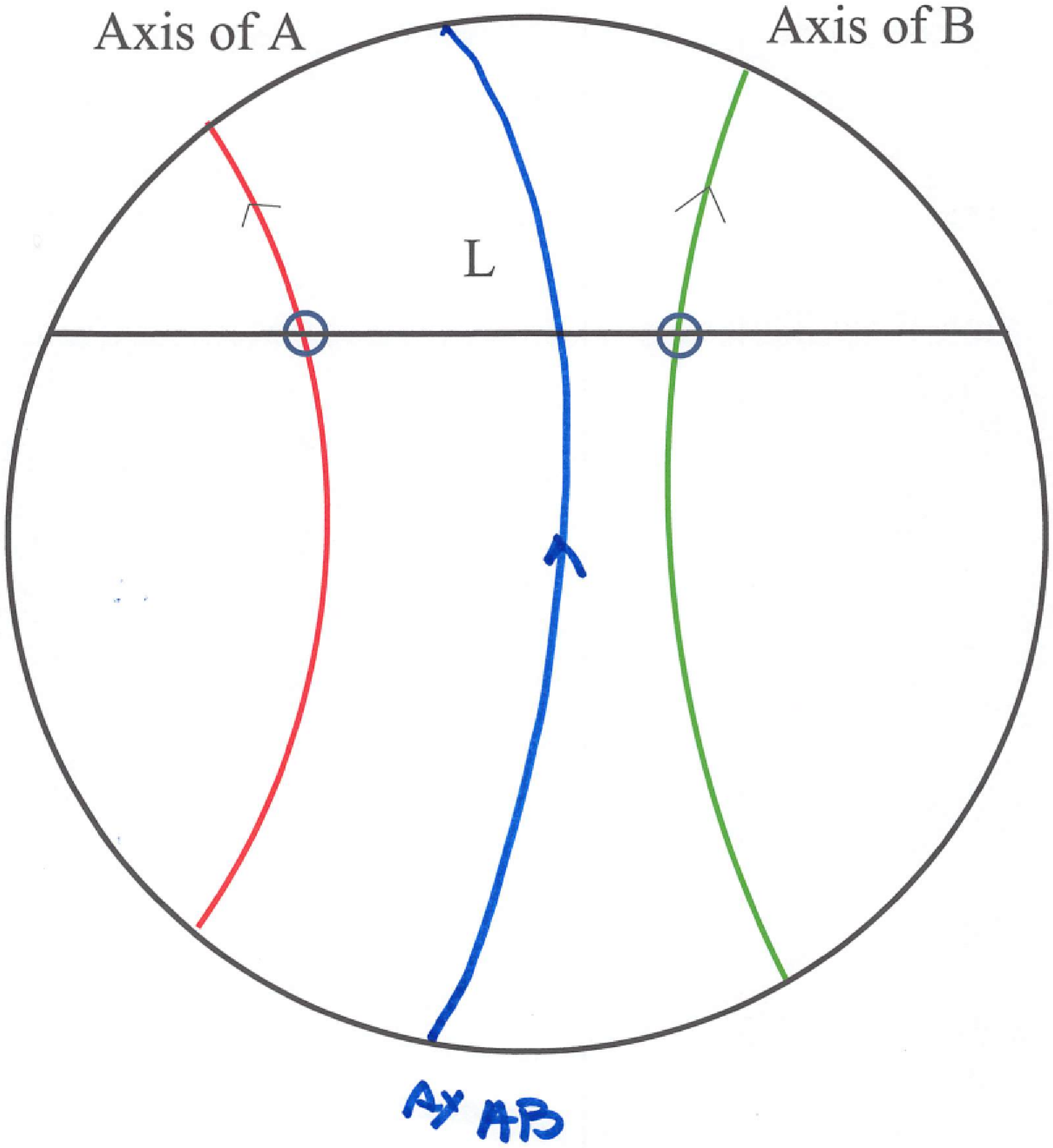}

\includegraphics[width=3.0cm,keepaspectratio=true]{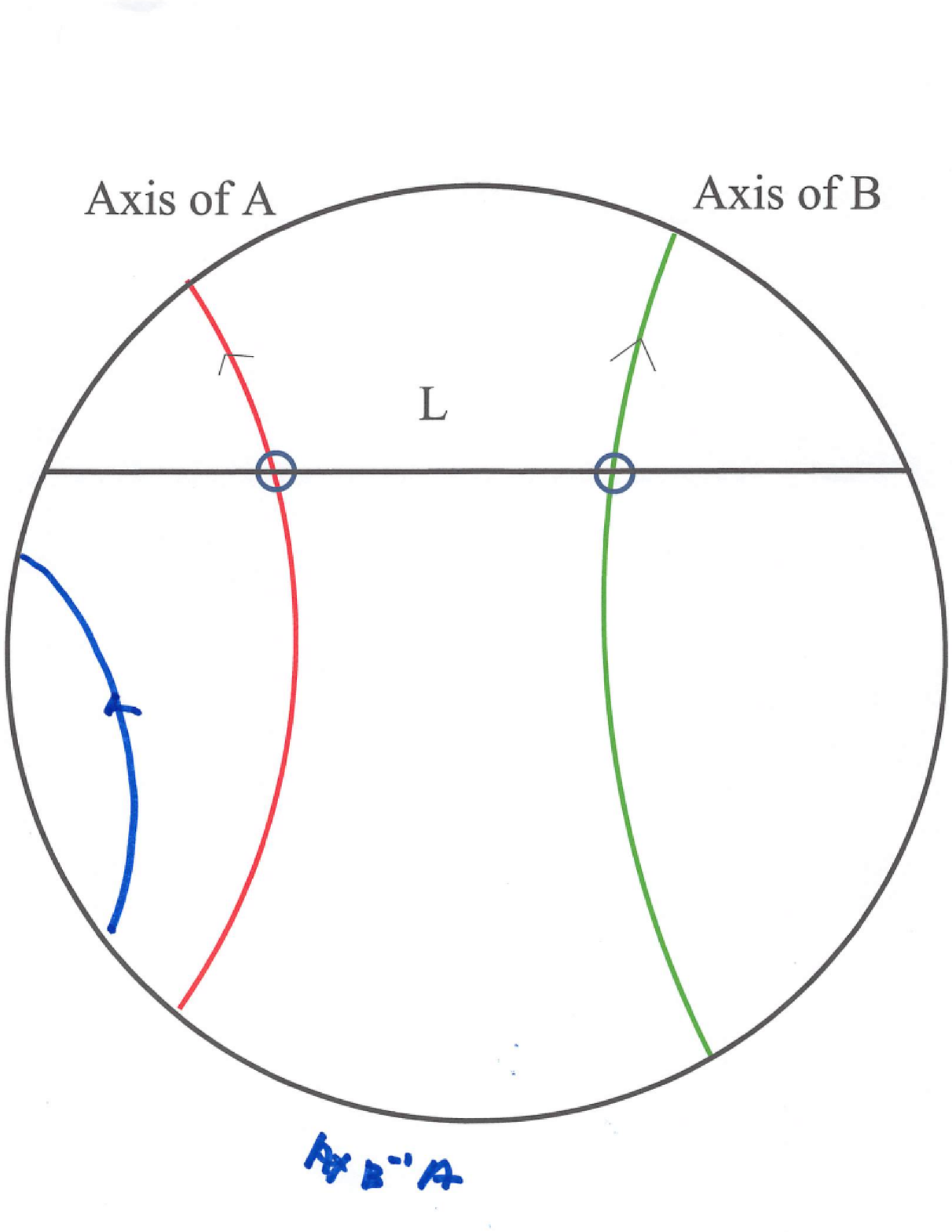} $\;\;$
\includegraphics[width=3.0cm,keepaspectratio=true]{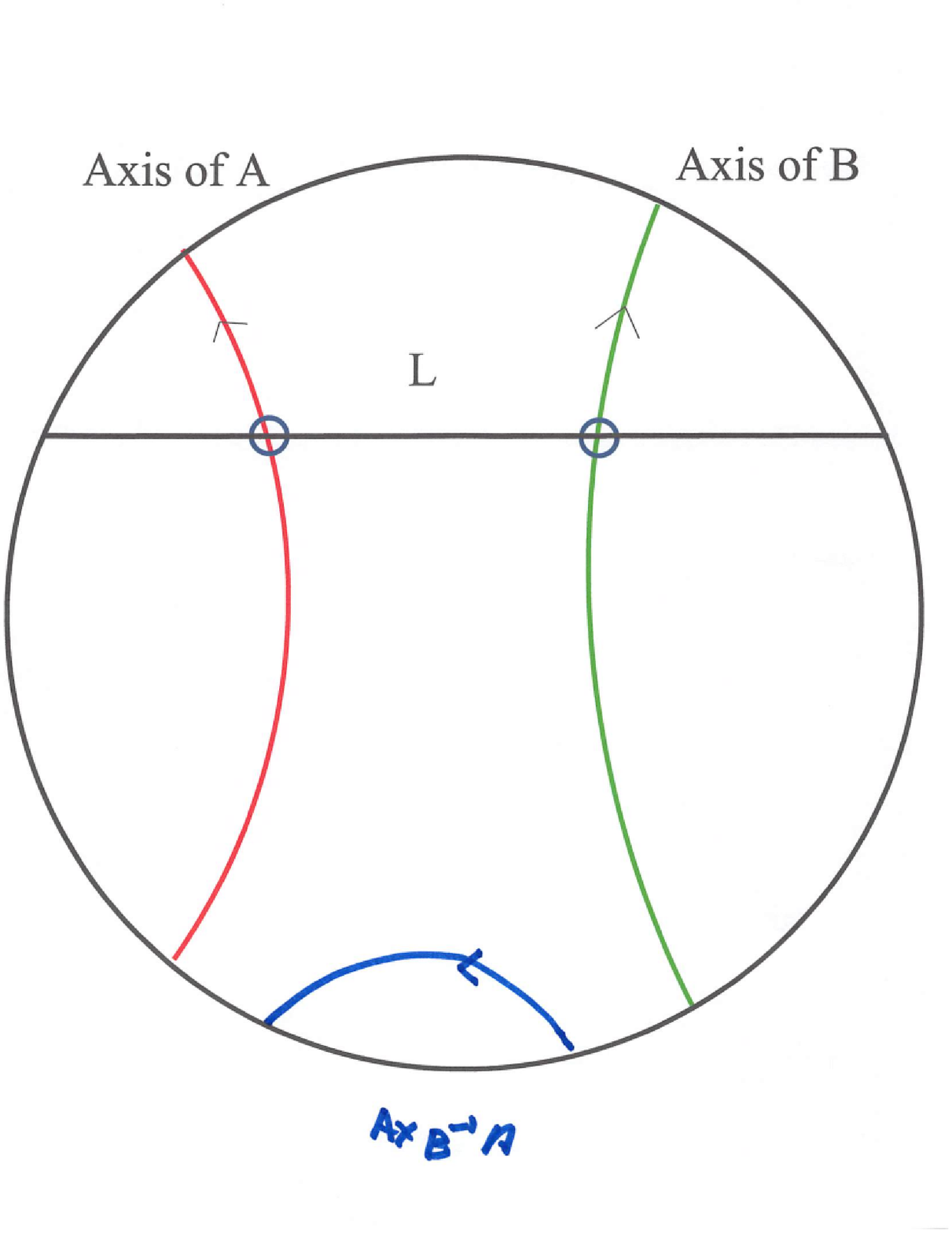}

 \end{center}
\caption{Axis $BA$ separates and Axis $B^{-1}A$  bounds}\end{figure}

\subsection{Overview of the GM Algorithm \label{sec:overviewAlgorithm}}

The algorithm begins with a pair of elements  $(A,B)$ in $PSL(2,\RR)$ and works with pull backs to $SL(2,\RR)$ as needed.

\medskip

The algorithm assumes that beginning pair are  a coherently oriented pair of hyperbolics and involves a number of steps to process the pair before it considers a {\it next} pair of generators or stops. Specifically:
\begin{enumerate}\vskip .1in
\item \label{item:1} If $\T \; AB^{-1} \le -2$,then  $G$ is discrete and free.  
\item \label{item:2} If $\T \; AB^{-1} > 2$, then  repeat \ref{item:1} and \ref{item:2} with the coherently oriented pair  $(AB^{-1},B)$ or $(B, AB^{-1}$)
\item If $\T \;  AB^{-1}  =  \pm 2$, then go  parabolic cases.
\item If $ -2  < \T \;  AB^{-1} < 2$, then  $G$ is either not free or not discrete.

\end{enumerate}

For ease of exposition we omit the details of cases that involve parabolics or elliptics. The algorithm can start with a pair that involves parabolics or elliptics. The complexity of all cases is analysed in \cite{GilmanComplexity} and repeated in the results of theorems \ref{theorem:partI}
and \ref{theorem:partII}. The complexity of the hyperbolic-hyperbolic case is higher than other cases and thus dominates.

\section{Computability and  Decidability} \label{sec:candd}
The precise definition
of a computable function for an oracle Turning machine is given in section \ref{sec:bitcomputability}.

As a starting point  we use Braverman's heuristic definition

\begin{definition} {\rm (\cite{Braverman} definition 1.11)} A function on  a set $S$ is {\it computable} if
there is a Turing machine that takes $x$ as an input and outputs the value $f(x)$.
\end{definition}
However, we modify this definition as we vary the model of computation.
We will be concerned with functions that are computable in the geometric model, in the BSS model, in the symbolic computation model and in the bit-computability model.
\begin{definition}
A Set $S$ is decidable if and only if there is a computable function with $f_S$ with
$$f_S(x) =           \left\{
                  \begin{array}{ll}
                    $1$, & \hbox{$ \iff x \in S $;} \\
                    $0$, & \hbox{$\iff x \notin S $.}
                  \end{array}
                \right.$$

\end{definition}
Sets are decidable (or undecidable) whereas functions are computable (or not).

\begin{definition}
An algorithm is decidable if its stopping set is decidable.
\end{definition}

We cite Braverman for a definition of a {\sl Turing Machine}. He says page $1$ and $2$ of \cite{Bravermanbook}
"A precise definition of a Turing Machine,TM, is somewhat technical and can be found in all texts on computability
e.g . \cite{Pap,Si}." {\footnote{He continues,   "Such a machine consists of a {\sl tape} and a {\sl head} which can be read/erase
/write the symbols on the tape one at a time and can shift its position on the tape in either direction. The symbols on the tape come from a finite alphabet and the TM can be in one of finitely many {\sl states}. Finally a simple look-up table tells TM which action to undertake depending upon the current state and the symbol read on the tape.}{\footnote{We note that there is inconsistent use of terminology-inconsistent use over time
partially due to recent developments via computer scientist and bit-computability.
Thus we note that what was termed the Real Number Algorithm in [8, 7] would
now simple be termed the BSS machine; what was termed the Turning Machine
(TM) algorithm there would now be termed the symbolic computation algorithm.}

An algorithm will consist of the composition of a number of functions or steps together with  branching tests that involve functions. For an algorithm to be computable, the functions that determine steps and branching are required to be computable.

\section{Computational models: type of  allowed simple steps} \label{sec:compmodels}

We distinguish computation models by the allowed simple steps. 

Heuristically a {\sl Turing Machine algorithm} is one where the simple steps
can be carried out by a computer.
To implement a Turning machine algorithm, a TM algorithm, the input must be finite. Since it may require an infinite amount of information to specify a real number from the set of all real numbers, no TM algorithm can deal with the set of all
real numbers. Thus in addition to BSS machines we consider bit-computability and oracle Turing machines (Section \ref{sec:bitcomputability}) that input computable real numbers and symbolic computation \ref{sec:symboliccompalgebra}, where the input consists of rational polynomials or equivalently their coefficients.

We have described the GM algorithm which is a {\it geometric algorithm}.

Further details of each of the other models  can be found in Sections \ref{sec:geomalg}, \ref{sec:BSS}, \ref{sec:symboliccompalgebra}, and \ref{sec:bitcomputability}.



\section{Geometric Algorithms} \label{sec:geomalg}

A geometric algorithm in hyperbolic two-space is one where the allowed simple steps are geometric in nature. This includes, for example,  determining when two geodesics are disjoint or intersect and when given two disjoint geodesics,  determining when a third disjoint geodesic separates the other two.

To a mathematician working in hyperbolic geometry, the GM algorithm is a stand alone theorem that determines when a non-elementary  two generator group is and is not discrete. The algorithm  needs to be translated to other models because the question is often raised as to whether the algorithm has been implemented. It can be implemented using symbolic computation \cite{GilmanComplexity} but one also needs to think about whether it is bit-computable. This is addressed in Section \ref{sec:bitcomputability}.

\section{Blum-Shub-Smale Machines, BSS Machines} \label{sec:BSS}

We note that the geometric GM algorithm was shown to be decidable in the BSS model \cite{GilmanComplexity}.

We review the details of BSS computation. The reader can also see Braverman \cite{Braverman}.
\vskip .05in

 A BSS machine is one where the simple steps

(i)  include all ordinary arithmetic operations on
real numbers including  comparing the size of two real numbers, finding if two real numbers are equal, and
 determining the sign of a real number.

(ii) include oracles that allow you to compute anything,   e.g. computing the $\arccos$ of a number,     determining the rationality of a real number.

We can consider the steps of a BSS machine as an outline or abstract of the (geometric) algorithm. That is, the sequence of computational moves to be made by the geometric algorithm without taking into account how the input is given or whether
the required computational operations can be implemented on a computer.

In particular our BSS machine has oracles
 so  it {\sl can} decide if  $\arccos x$ is a rational multiple of $\pi$.

Given a BSS machine, the halting is the set $S$ is a set for which given $x \in \RR^n$ and an input vector $x$ the BSS machine stops if and only if $x \in \RR^n$.
If a set $S$ is the halting set of a BSS machine,  it is {\it semicomputable} or {\it semidecideable}. If its complement is also semidecideable, then the set is BSS computable or BSS decidable.

\begin{definition}  A {\it semi-algebraic formula} $\phi(x_1,...,x_n)$ is a finite combination of polynomial
equalities and inequalities over $\RR^n$  linked by the logical connectives of and, or, not.
A {\it semialgebraic set} in $\RR^n$ is the set of points satisfying a semi-algebraic formula.
\end{definition}

\noindent Any semialgebraic set is computable by a BSS machine.

\medskip

We have the following (\cite{BCSS98} Theorem $1$ and \cite{Braverman}).

\begin{theorem} {\rm  (Blum, Shub, Smale \cite{BCSS98}; Braverman \cite{Braverman})}
If a set $C \subset \RR^n$ is decided by a BSS machine,  then $C$ is a countable
union of semi-algebraic sets. \end{theorem}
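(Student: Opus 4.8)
~\textbf{Proof proposal.}
The plan is to analyze the computation tree of a BSS machine that decides $C$ and show that the set of inputs accepted along any single halting computation path is semi-algebraic; since a BSS machine halts (on accepted inputs) after finitely many steps, the accepting computations are organized into a countable tree whose paths of finite length are at most countably many, giving $C$ as a countable union of these path-sets.

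First I would recall the structure of a BSS machine over $\RR$: it has finitely many nodes, of which the branching (computation) nodes perform a rational (indeed polynomial, after clearing denominators and tracking the non-vanishing of denominators as extra inequalities) operation on the current state vector, and the test nodes branch according to the sign of a polynomial (equivalently, a comparison $\le 0$ or $= 0$) in the current coordinates. Fix a natural number $t$ and consider all computation paths of length $t$ through the machine. There are finitely many such paths for each $t$, hence only countably many finite paths in total. Along a fixed path $\gamma$ of length $t$, the coordinates of the state vector at each step are obtained from the input $x \in \RR^n$ by iterated application of rational maps; substituting, each coordinate at step $s \le t$ is a rational function $P_{\gamma,s}(x)/Q_{\gamma,s}(x)$ of the input, with $Q_{\gamma,s}$ a polynomial. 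The condition that the input actually follows the path $\gamma$ is then the conjunction, over all test nodes encountered along $\gamma$, of sign conditions on these rational functions, together with the non-vanishing of all denominators $Q_{\gamma,s}$ encountered. Multiplying through by (squares of) the denominators converts each such sign condition into a polynomial equality or inequality in $x$, so the set $C_\gamma := \{x \in \RR^n : x \text{ follows } \gamma \text{ and } \gamma \text{ ends in an accepting state}\}$ is defined by a finite Boolean combination of polynomial (in)equalities, i.e.\ is semi-algebraic.

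Next I would assemble these pieces: since the machine is deterministic, every $x \in C$ follows exactly one halting accepting path, and that path has some finite length; conversely every $x$ in some $C_\gamma$ (with $\gamma$ accepting) is in $C$. Therefore $C = \bigcup_{\gamma} C_\gamma$, the union taken over the countably many finite accepting paths $\gamma$, and each $C_\gamma$ is semi-algebraic. This is exactly the asserted countable union of semi-algebraic sets.

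The main obstacle, and the place where care is needed, is the bookkeeping that keeps each $C_\gamma$ genuinely semi-algebraic: the node maps of a BSS machine are allowed to be rational, not merely polynomial, so one must track the locus where denominators vanish (on which the computation is undefined, hence effectively another branch) and clear denominators without changing the truth of the sign tests — this is routine but must be done honestly. A secondary subtlety is to be sure ``halting'' really does cap the relevant paths at finite length so that only countably many $\gamma$ occur; this is immediate from the definition of the halting set, but it is what makes ``countable'' (rather than merely ``a union of semi-algebraic sets'') correct. Everything else is the standard unwinding of a finite-control machine into its computation tree, and I would present it at that level of detail rather than grinding through the explicit polynomials $P_{\gamma,s}, Q_{\gamma,s}$.
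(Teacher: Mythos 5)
Your proposal is correct and is exactly the standard path-decomposition argument: the paper itself states this theorem without proof, citing Blum--Cucker--Shub--Smale and Braverman, and the proof in those sources is the same one you give (countably many finite computation paths, each carving out a semi-algebraic subset of inputs after clearing denominators). Your handling of the rational node maps and the finiteness of halting paths addresses the only real points of care.
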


\subsection{Complexities: BSS machine} \label{sec:complBSS}

We define the {\it algebraic complexity}  of an algorithm to be the number of steps it takes to process an input of given size.
Let $T$ be {\bf the maximal initial trace}, that is $\max\{ |\T  \; A|, |\T \; B|, \T \;  AB|,  |\T \; AB^{-1}| \}$  and $d$ the order of the {\it first finite order elliptic} element the algorithm encounters
\vskip .2in
We recall
\begin{theorem} \label{theorem:partI} {\rm (Gilman \cite{GilmanComplexity}) } {\bf {\rm   Complexity Part I}}
\begin{enumerate}
\item
The {\sl algebraic } complexity of the BSS $PSL(2,\QQ)$  algorithm is $O(T)$.
 \item The {\sl algebraic} complexity of the BSS $PSL(2,\RR)$  algorithm is $O(T^2 +d)$
\item The {\sl algebraic} complexity of one BSS calculation of J{\o}rgensen's inequality is $O(1)$.
\end{enumerate}

\end{theorem}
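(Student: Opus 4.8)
The plan is to decouple the cost of a single iteration of the Gilman--Maskit algorithm from the number of iterations it performs, and to check that in the BSS model a single iteration costs $O(1)$. At each stage the algorithm stores the pull backs of the current pair $(A,B)$ to $SL(2,\RR)$, forms a bounded list of short words in them ($A$, $B$, $AB$, $AB^{-1}$, plus a few more words in the parabolic and elliptic subcases), computes their traces and absolute values, and then performs a fixed number of sign tests and size comparisons to branch according to the trichotomy of the theorem on the geometric meaning of negative trace and to the tests for J{\o}rgensen's inequality and for the hypotheses of the Poincar\'e polygon theorem. Each such operation is a fixed finite composition of arithmetic operations and order comparisons on real numbers, so a BSS machine executes it in $O(1)$ steps regardless of the size of the input; in particular a single evaluation of J{\o}rgensen's inequality $\abs{(\T A)^2-4}+\abs{\T[A,B]-2}\ge 1$ is one such fixed formula, which already gives (3). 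It follows that the algebraic complexity of the whole algorithm is $O(1)$ times the number of iterations, so the problem reduces to bounding that number.

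To bound the number of iterations I would use the two structural facts behind \cite{GM}. First, the algorithm orders the cases hyperbolic $>$ parabolic $>$ elliptic and a Nielsen move alters only one generator, so a move passing to a strictly easier case can occur only a bounded number of times; this reduces the count to bounding the number of consecutive iterations that remain inside one case. Second, within the hyperbolic case, in the presence of J{\o}rgensen's inequality a Nielsen transformation strictly decreases the maximal trace, which never drops below $2$ and starts at $T$. For (1) the plan is to extract from the termination argument an \emph{explicit} lower bound on how much the maximal trace decreases per Nielsen step and to show that over $\QQ$ this bound is a fixed positive constant; then the number of same-case iterations, hence of all iterations, is $O(T)$. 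The elliptic subcases are treated separately: there the reduction is governed by the rotation angle of the first finite-order elliptic encountered and behaves like a Euclidean-algorithm/continued-fraction computation, contributing $O(d)$ iterations, where $d$ is the order of that element; over $\QQ$ this term is dominated, giving the clean bound $O(T)$.

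For (2) the same scheme applies, but two effects raise the bound. The uniform per-step trace-decrease estimate is weaker over $\RR$ than over $\QQ$: I would aim to show the guaranteed decrease can be as small as a constant multiple of $1/T$, so that up to $O(T^2)$ consecutive same-case iterations may be needed before the algorithm leaves the hyperbolic case or certifies (non-)discreteness. The elliptic subcases again contribute $O(d)$ iterations from the rotation-angle reduction, and this term is now no longer absorbed. Adding the two contributions and multiplying by the $O(1)$ per-iteration cost yields $O(T^2+d)$, which is (2); since the hyperbolic--hyperbolic case dominates, no other case changes the asymptotics.

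The main obstacle is exactly the quantitative trace-decrease estimate and its sensitivity to the arithmetic of the matrix entries: one must turn the qualitative statement that traces decrease under a Nielsen transformation in the presence of J{\o}rgensen's inequality into a sharp, uniform lower bound, track how that bound deteriorates from the rational to the real setting (so as to separate the linear bound of (1) from the quadratic bound of (2)), and isolate the behaviour of the elliptic subcases into the single summand $d$. Once that estimate is available, the remainder is routine bookkeeping together with the observation that each iteration is an $O(1)$ BSS computation.
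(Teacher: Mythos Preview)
The paper does not supply a proof of this theorem: it is introduced with ``We recall'' and attributed to \cite{GilmanComplexity}, so there is no in-paper argument to compare your proposal against. Your overall framework --- an $O(1)$ BSS cost per iteration multiplied by a bound on the number of iterations --- is the right decomposition and is the one used in that reference, and your treatment of (3) is essentially complete as written.

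The genuine gap is the mechanism you propose for separating (1) from (2). You plan to show that over $\QQ$ the maximal trace drops by at least a \emph{fixed positive constant} at each Nielsen step, yielding $O(T)$ iterations, while over $\RR$ the drop can shrink to order $1/T$, yielding $O(T^{2})$. The $\QQ$ half of this is not justified and, as stated, is implausible: rational entries force all subsequent traces to be rational, but $\QQ$ has no smallest positive element, so rationality alone provides no uniform floor on the one-step trace decrement. Whatever drives the linear bound in \cite{GilmanComplexity} must use more than a bare gap argument --- some genuinely arithmetic feature of the rational case (for instance, the severe restriction on the possible orders of an elliptic with rational trace, or control of denominators along the Nielsen sequence) that has no analogue over $\RR$. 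You are right to flag the quantitative trace-decrease estimate as the main obstacle, but the specific form you conjecture for it over $\QQ$ would not go through as written; that step needs a different idea before the bookkeeping you describe can close the argument.
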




\section{ Symbolic Computation-Computer Algebra} \label{sec:symboliccompalgebra}
\vskip .05in

We note that discreteness was shown to be decidable in the symbolic computation model \cite{GilmanComplexity}

We assume that the matrix entries lie in a finite simple extension of the rationals $\QQ(\gamma)$
of degree $D$ and that
the  input is given by

$M_{\gamma}$, minimal polynomial for $\gamma$; $D$, the degree;
with the eight representing polynomials
$R_{\alpha_i}$  each with an isolating interval  $Isol_{{\alpha}_i}$ ,  $i = 1, ...8$.

We
measure size of a polynomial by its degree and its seminorm:

If $P(x) = p_t\cdot x^t + ... + p_l\cdot x + p_0$, a polynomial with rational coefficients, $p_i = {\frac{r_i}{s_i}}$ written in lowest terms.

The {\bf seminorm of $P$}, $SN(P) = |r_0| +  |r_1| + \cdots + |r_t| + |s_0| + \cdots + |s_t|$.

We define
$L(SN) = \lfloor \ln(SN) \rfloor + 1$.

$L(SN)$   so behaves computationally very much like a logarithm.

\subsection{$\exists$ Symbolic Computation Algorithms } \label{sec:symbolcomp}

Given $\alpha$, $\beta$, algebraic numbers,
one can do algebraic number arithmetic.

E.g. There are  programs \cite{Collins, Collins2} that input
  $$\{ R_{\alpha}, R_{\beta}, Isol_{\alpha}, Isol_{\beta} \}$$
and output
   $$\{ R_{\alpha + \beta}, \;  Isol_{\alpha + \beta} \}$$
and also estimate the costs so that for example
the cost of addition is $O(D(L(S))^2)$ and that of multiplication is $O(D^3(L(S))^2)$

It is easily seen that the size increases can be estimated by  $SN(AB) \le SN(A)SN(B)$ although better bounds may exist.

\subsection{Contrast: GM BSS and Symbolic Computation Translations}

We contrast the GM algorithm when translated and implemented as a BSS machine with the algorithm translated and  implemented using symbolic computation by comparing the complexities\footnote{Omitting the intersecting axes case and simplifying notation used in \cite{GilmanComplexity}}.

{\begin{theorem}
{\rm (Gilman
  \cite{GilmanComplexity})}  {\bf Complexity Part II} {\rm   Theorem  \ref{theorem:partI}
 continued}  \label{theorem:partII}
\vskip .05in

(4) The {\sl Complexity of $PSL(2,\RR)$ disjoint axes algorithm is}
$$O(2^{k(S_0D)^2})$$
where $k$ is a constant.

$S_0$ replaces $T$, $D$ replaces $d$ where $D$ s the degree of the extension and $S_0$ is the maximum of the semi-norms for the $8$ representing polynomials and the minimal polynomial of $\gamma$

\vskip .03in

(5) The {\sl The Complexity of the {\rm first} implementation of J{\o}rgensen} is : $$O(D^{8}(L(S_0)^2)$$

But at later step we are looking at $S_M$ instead of $S_0$ and $L(S_M)$ again gives us $$O(2^{kS_0^2D^2})$$.

\end{theorem}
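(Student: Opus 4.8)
The plan is to bootstrap from the \emph{algebraic} (step-count) complexity of Theorem~\ref{theorem:partI} to the \emph{symbolic} (word-count) complexity, by first tracking how the sizes of the eight representing polynomials grow as the algorithm runs and then paying the per-operation cost of algebraic-number arithmetic at those inflated sizes. In the symbolic model the initial data is the tuple $\{R_{\alpha_i}, Isol_{\alpha_i}\}_{i=1}^{8}$ together with $M_\gamma$ and $D$; the role played by the maximal initial trace $T$ in the BSS analysis is now played by $S_0$, the largest seminorm among these polynomials, and the role of the elliptic order $d$ is played by the extension degree $D$. Under this dictionary Theorem~\ref{theorem:partI}(2) says the disjoint-axes branch performs $M = O(S_0^2 + D)$ arithmetic operations on algebraic numbers before it halts.

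Next I would prove a size-growth lemma. Each Nielsen step of the GM algorithm replaces one of the two current generators by its product with (a power of) the other, so at the level of matrix entries a single step forms boundedly many sums and products of the current entries in $\QQ(\gamma)$. Combining $SN(AB) \le SN(A)\,SN(B)$ with the size and cost estimates of the Collins-type routines (\cite{Collins, Collins2}) for reduction modulo $M_\gamma$, one obtains a recursion of the form $L(S_{j+1}) \le 2\, L(S_j) + c\, D\, L(S_0)$ for the maximum seminorm $S_j$ of the representing polynomials after $j$ steps. Unrolling this over $M$ steps gives $L(S_M) = O\bigl(2^{M} D\, L(S_0)\bigr)$, and substituting $M = O(S_0^2 + D)$ yields $L(S_M) = 2^{O(S_0^2 D^2)}$.

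It then remains to pay the arithmetic cost. One multiplication of algebraic numbers of seminorm $\le S$ in $\QQ(\gamma)$ costs $O\bigl(D^3 (L(S))^2\bigr)$ and one addition costs $O\bigl(D (L(S))^2\bigr)$ (\S\ref{sec:symbolcomp}), so the cost of the $M$-th step is $O\bigl(D^3 (L(S_M))^2\bigr)$; since the per-step cost grows at least geometrically the whole run is dominated by its last steps, whence the total is $O\bigl(D^3 (L(S_M))^2\bigr) = O\bigl(2^{k(S_0 D)^2}\bigr)$ for a suitable constant $k$, which is (4). For (5), one J{\o}rgensen test is a fixed finite composition of additions, multiplications and interval refinements --- it is $O(1)$ in algebraic complexity by Theorem~\ref{theorem:partI}(3) --- so applied to the \emph{initial} data it costs $O\bigl(D^{8}(L(S_0))^2\bigr)$, the exponent $8$ absorbing the degree growth through the chained primitive operations; applied instead after $M$ steps, on polynomials of seminorm $S_M$, it costs $O\bigl(D^3 (L(S_M))^2\bigr) = O\bigl(2^{k S_0^2 D^2}\bigr)$ again.

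The main obstacle is the bookkeeping inside the size-growth lemma: showing that a step at worst essentially squares the seminorm rather than growing it faster, controlling the extra factor coming from reduction modulo $M_\gamma$ --- this is where the degree $D$ migrates into the exponent --- and matching the final exponent to exactly $(S_0 D)^2$ rather than some other polynomial in $S_0$ and $D$. A secondary point to secure is the legitimacy of the $T \leftrightarrow S_0$, $d \leftrightarrow D$ dictionary, i.e. that in the symbolic model the number of Nielsen steps is genuinely governed by $S_0$ and $D$ and that no single step inflates sizes faster than the recursion permits.
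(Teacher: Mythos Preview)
This theorem is not proved in the present paper; it is quoted from \cite{GilmanComplexity}, and the only justification the paper itself offers is the short heuristic in the subsection ``Exponential growth'' immediately following the statement: Nielsen replacements are either \emph{linear} steps $(A,B)\mapsto(AB^{-1},B)$ or \emph{Fibonacci} steps $(A,B)\mapsto(B,AB^{-1})$; Fibonacci steps make the word-length of the current generators grow like Fibonacci numbers, and since $SN(AB)\le SN(A)\,SN(B)$ the seminorms therefore grow exponentially in the step count, which is itself governed by the initial data --- whence the algorithm is ``potentially double exponential''. That is the entire in-paper argument.

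Your proposal is a correct and considerably more detailed expansion of exactly this heuristic. Your size-growth recursion $L(S_{j+1})\le 2\,L(S_j)+c\,D\,L(S_0)$ is the quantitative form of ``a Fibonacci step at worst squares the seminorm'', and your observation that the total cost is dominated by the last few steps is the passage from per-step cost to aggregate cost that the paper leaves implicit. The $T\leftrightarrow S_0$, $d\leftrightarrow D$ dictionary that you rightly flag as needing justification is simply \emph{asserted} in the theorem statement here (``$S_0$ replaces $T$, $D$ replaces $d$''); its verification, like the rest of the proof, lives in \cite{GilmanComplexity} and is not reproduced. So there is nothing substantive to compare your argument against beyond the one-paragraph sketch, and your outline is faithful to it --- indeed more careful than what the paper supplies.
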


\subsection{Exponential growth}

For the GM algorithm, the semi-norm,  $SN$, grows exponentially. We note that the algorithm replaces $(A,B)$ by a new pair and the we say that the new pair is either given by a linear step or a Fibonacci step. Specifically replace $(A,B)$ by $$\left\{
                                        \begin{array}{ll}
                                          (AB^{-1},B), & \hbox{linear step ;} \\
                                          (B,AB^{-1}) , & \hbox{Fibonacci step.}
                                        \end{array}
                                      \right.$$

Note  that Fibonacci steps cause the length of the words subsequently  considered to grow exponentially and that  matrix entries grow exponentially under product. Thus the algorithm is potentially double exponential.

On the other hand, one
can handle elliptics
\begin{lemma}   (Gilman  \cite{GilmanComplexity})

 If $E$ is elliptic of finite order, then its order is bound by $32D^2$
\end{lemma}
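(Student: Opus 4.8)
The plan is to trade the finiteness of the order of $E$ for a bound on the degree of a cyclotomic field, and then feed that into an elementary lower bound for Euler's totient function $\varphi$. First I would record the arithmetic input: $E$ is one of the elements produced by the algorithm, hence a word in the two generators, so its matrix entries lie in $\QQ(\gamma)$; in particular $\T E\in\QQ(\gamma)$, and $[\QQ(\gamma):\QQ]=D$.

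Next, suppose $E$ has order $n$ as an element of $PSL(2,\RR)$. Then $E$ is conjugate in $PSL(2,\RR)$ to a rotation through angle $2\pi k/n$ with $\gcd(k,n)=1$, so for a suitable pull back $\T E=\pm 2\cos(\pi k/n)$, and therefore $(\T E)^2-2=2\cos(2\pi k/n)=\zeta^{k}+\zeta^{-k}$, where $\zeta=e^{2\pi i/n}$ is a primitive $n$-th root of unity. Since $\gcd(k,n)=1$, $\zeta^{k}$ is again primitive of order $n$, so $\zeta^{k}+\zeta^{-k}$ generates the maximal real subfield of $\QQ(\zeta)$, which has degree $\varphi(n)/2$ over $\QQ$ (the cases $n\le 2$ being trivial). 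Because $(\T E)^2-2\in\QQ(\gamma)$, this forces $\varphi(n)/2\le D$, i.e. $\varphi(n)\le 2D$. Now I would invoke the crude bound $\varphi(n)\ge\sqrt{n}/2$, valid for every $n\ge 1$, which gives $\sqrt{n}\le 4D$, hence $n\le 16D^2$. Finally, the order of the matrix $E$ in $SL(2,\RR)$ is $n$ or $2n$, according as the pull back of $E^{n}$ is $I$ or $-I$, so in all cases the order of $E$ is at most $2n\le 32D^2$, as claimed.

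The only real care needed is the bookkeeping in the middle step: one must keep straight the distinction between the order of $E$ in $PSL(2,\RR)$ and the order of its pull back in $SL(2,\RR)$ (the source of one factor of $2$), the passage from a cyclotomic field to its maximal real subfield (another factor of $2$), and, if one prefers to argue directly with $\cos(\pi k/n)$ rather than with its square, the dependence of the relevant root of unity on $\gcd(k,2n)$. Assembling these factors correctly, together with the totient estimate $\varphi(n)\ge\sqrt{n}/2$, yields exactly the stated constant; plugging in any of the sharper standard bounds such as $\varphi(n)\ge\sqrt{n}$ for $n\neq 1,2,6$ would only improve it, so this is not a delicate point but merely an accounting one.
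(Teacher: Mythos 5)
Your argument is correct: passing from $\T E$ to $(\T E)^2-2=\zeta^k+\zeta^{-k}$ cleanly pins down the degree as $\varphi(n)/2$, the containment in $\QQ(\gamma)$ gives $\varphi(n)\le 2D$, and the crude bound $\varphi(n)\ge\sqrt{n}/2$ (which does hold for all $n$, since in fact $\varphi(n)\ge\sqrt{n/2}$) together with the $SL$ versus $PSL$ factor of $2$ yields exactly $32D^2$. The paper only cites this lemma from \cite{GilmanComplexity} without reproducing a proof, but your route through the maximal real cyclotomic subfield and the totient estimate is the standard one and accounts precisely for the stated constant.
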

Y.C. Jiang revised parts of the algorithm. He used the revised version to show:
\begin{theorem} {\rm (Y. C.  Jiang \cite{Jiang})}

The  symbolic computation algorithm is of polynomial complexity.
\end{theorem}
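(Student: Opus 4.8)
The doubly–exponential bound of Theorem~\ref{theorem:partII} has two compounding sources, and the plan is to remove both and then feed the result into the polynomial step count already available from Complexity Part~I. First, the Fibonacci steps $(A,B)\mapsto(B,AB^{-1})$ make the lengths of the words in $A,B$ that the naive implementation forms grow like Fibonacci numbers, hence exponentially. Second, since $SN(XY)\le SN(X)SN(Y)$, the seminorms of the representing polynomials of the matrix entries roughly double at each product, so $L(SN)$ grows linearly in the number of steps and, by the cost estimates of Section~\ref{sec:symbolcomp}, the per–step cost of algebraic–number arithmetic is exponential in the step count. The revised algorithm must therefore keep both the data it carries and the cost of a single step polynomially bounded.

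\emph{Step 1: carry only a bounded tuple of traces.} As the overview of Section~\ref{sec:overviewAlgorithm} and the theorem on the geometric meaning of negative trace make explicit, every branching test in the GM algorithm --- J{\o}rgensen's inequality, the sign and size of $\T AB^{-1}$, and the parabolic/elliptic detections --- is a condition on the tuple $(\T A,\T B,\T AB)$ and quantities derived from it. By the Fricke identity $\T AB^{-1}=\T A\,\T B-\T AB$, a Nielsen step updates this tuple by a fixed formula using only a bounded number of additions and multiplications in the fixed field $\QQ(\gamma)$. Hence the revised algorithm never forms a long matrix product: at every stage it stores three (or a fixed number of) algebraic numbers, each a polynomial over $\QQ$ reduced modulo $M_{\gamma}$ together with an isolating interval.

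\emph{Step 2: the stored numbers stay small, and Step 3: count and multiply.} In the hyperbolic disjoint–axes regime the relevant traces are real and, by the trace–decrease lemma that underlies termination of the GM algorithm, each pass pushes them toward $2$ by a definite amount; combined with the a priori start $\T A,\T B\le T$, every trace that occurs lies in a bounded interval depending only on $T$. Thus each stored number has all archimedean absolute values bounded, lies in the one fixed field $\QQ(\gamma)$, and therefore has bounded height, so after reducing modulo $M_{\gamma}$ and refining its isolating interval at each step its seminorm stays $O(\mathrm{poly}(S_0,D))$ throughout the run --- which is precisely the seminorm explosion of Theorem~\ref{theorem:partII} removed. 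Now by Complexity Part~I the number of arithmetic steps is polynomial in $T$ and $d$, and by the elliptic–order lemma $d=O(D^2)$, so the step count is polynomial in $T$ and $D$; each step is a bounded number of $\QQ(\gamma)$–arithmetic operations on numbers of seminorm $O(\mathrm{poly}(S_0,D))$, each costing $\mathrm{poly}(D,L(S_0))$ by the cited addition/multiplication bounds. Multiplying the step count by the per–step cost yields a bound polynomial in $S_0$, $D$ and $T$, which is the assertion.

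\emph{Main obstacle.} The crux is Step~2: boundedness of the \emph{magnitude} of an algebraic number does not by itself bound its \emph{seminorm} in a fixed representation, so one must show that repeated reduction modulo the fixed $M_{\gamma}$, together with isolating–interval refinement, keeps the coefficient sizes polynomially bounded --- essentially that all numbers occurring live in one number field of bounded discriminant and bounded archimedean size, hence bounded height. Making this precise, and checking that the parabolic and elliptic sub–cases and the intersecting–axes case set aside in Theorem~\ref{theorem:partII} are handled the same way by the revised version, is where the real work lies.
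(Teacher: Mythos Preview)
The paper does not actually prove this theorem: it is stated as a citation of Jiang's result, preceded only by the remark that ``Y.~C.~Jiang revised parts of the algorithm.'' There is therefore no proof in the paper to compare your proposal against; any detailed comparison would have to be with \cite{Jiang} itself.

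That said, your sketch has a real gap, and it is exactly the one you flag at the end but then try to dismiss. In Step~2 you assert that each stored trace ``has all archimedean absolute values bounded'' and hence bounded height. The trace--decrease lemma controls the trace only in the \emph{one} real embedding of $\QQ(\gamma)$ in which the group actually sits inside $SL(2,\RR)$; it says nothing whatsoever about the images of that trace under the other embeddings of $\QQ(\gamma)$. An algebraic number of degree $D$ can be tiny in one real place and enormous in the others, so ``bounded in our embedding'' does not give ``bounded height,'' and bounded height is what you need to bound the seminorm of the reduced representing polynomial. Without an independent mechanism to control coefficient growth --- e.g.\ a modified representation, explicit denominator control, or a bound coming from the arithmetic of the recursion rather than from the geometry --- the seminorm can still blow up along the run, and your per--step cost bound $\mathrm{poly}(D,L(S_0))$ is unjustified. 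Step~1 (carrying a fixed trace tuple and updating via the Fricke identity) and Step~3 (multiplying step count by per--step cost) are sound in outline, but the argument as written does not close.
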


This is all well and good except for the fact that as hyperbolic geometers we don't think of all of our points as being roots of polynomials.

\section{Bit Computability over the Reals} \label{sec:bitcomputability}

This is an old  topic (see \cite{Ko, Wei, ZB} and reference given there) which has most recently been further developed by Braverman and others
\cite{Braverman, Bravermanbook, BravermanAMS} since $2005$.

We are interested in algorithms over the computable reals which require bit computability. The concept of  bit computability  can be applied to functions and to graphs and to sets. The model of computation used in these cases is an  {\it oracle Turning machine}.

We begin with definitions(see \cite{Bravermanbook}).

\begin{definition}
A function $f(x)$ is computable if there is a TM which takes $x$ as input and outputs the value of $f(x)$.
\end{definition}
\begin{definition}
A real number $\alpha$ is said to be computable if there is a computable function $\phi: \NN \rightarrow \QQ$ such that, for all $n$, $$|\alpha - {\frac{\phi(n)}{2^n}} | < 2^{-n}.$$
\end{definition}
We work with the set of computable real numbers.
\begin{definition} A dyadic-valued function $\phi: \NN \rightarrow \mathbb{D}$ is called an oracle for a real number $x$ if it satisfies
$|\phi(m) -x| < 2^{-m}$ for all $m$.
  \end{definition}

  An {\it oracle Turing machine} is a TM that can query the value $\phi(m)$ of some oracle for an arbitrary $m \in \NN$.
We note that in these cases
\begin{itemize}
\item
Our algorithms cannot say READ $x$.
\item
They say instead

READ $x$ WITH PRECISION $(1/2)^n$.

\item

The TM comes with this command.
\end{itemize}


The oracle terminology separates the problem of computing the parameter $x$ from the problem of computing the function $f$ on a given $x$ and note

\begin{definition}
Let $f:S \rightarrow \RR$. Then {\it $f(x)$ is computable} if is there is an oracle TM that computes $f(x)$ to any desired degree if accuracy. That is given $n$, the machines returns a dyadic $q$ with $|q-x| < (1/2)^n$.
\end{definition}
\begin{definition}
A set {\it $S$ is computable} if there is a function that outputs yes if a point is the
set and no if the point is not in the set.
\end{definition}


\begin{theorem} {\rm (Braverman, Yampolsky \cite{Bravermanbook})}

Let $S \subset {\RR}^k$. Assume that $f: S \rightarrow  \RR^k$ is computable by an oracle TM. Then $f$ is continuous.
\end{theorem}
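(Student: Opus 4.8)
The plan is to exploit the finiteness of any halting oracle computation: a run of an oracle Turing machine that halts consults its oracle only at finitely many arguments, so the output is determined by finitely many dyadic approximations of the input, and any two inputs close enough to share such a family of approximations must produce the same output. Continuity then falls out by letting the requested accuracy grow.

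Concretely, I would fix $x \in S$ and a target accuracy $n$, and begin by choosing a deliberately \emph{sharp} oracle $\phi$ for $x$ --- one with $\abs{\phi(m) - x} < 2^{-m-1}$ for every $m$ (possible since $\mathbb{D}$ is dense in $\RR$, and such a $\phi$ is still a legitimate oracle for $x$). Running the oracle TM $M$ that computes $f$ on $\phi$ with target accuracy $n$, the computation halts, so it queries $\phi$ only at finitely many arguments $m_1,\dots,m_j$, obtaining the dyadic values $d_i = \phi(m_i)$, and it returns a dyadic (vector) $q$ with $\abs{q - f(x)} < 2^{-n}$. I would then set $m^{\ast} = \max_i m_i$ and $\delta = 2^{-m^{\ast}-1}$.

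Next, for any $y \in S$ with $\abs{y-x} < \delta$, I would check that $\abs{d_i - y} \le \abs{d_i - x} + \abs{x - y} < 2^{-m_i-1} + 2^{-m^{\ast}-1} \le 2^{-m_i}$, so the partial assignment $m_i \mapsto d_i$ extends to a total oracle $\psi$ for $y$ (defining $\psi$ arbitrarily but validly at the unqueried arguments). Since $M$ is deterministic and $\psi$ agrees with $\phi$ on $m_1,\dots,m_j$, an induction on the steps of the computation shows that $M^{\psi}$ reproduces the run of $M^{\phi}$ verbatim --- the same sequence of queries (in particular $\psi$ is never queried outside $\{m_1,\dots,m_j\}$) and the same halting output $q$. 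Because $\psi$ is a valid oracle for $y$, the specification of $M$ forces $\abs{q - f(y)} < 2^{-n}$, whence $\abs{f(x) - f(y)} < 2^{-n+1}$. Taking $n$ with $2^{-n+1} < \varepsilon$ turns this into the statement that $f$ is continuous at $x$, and $x \in S$ was arbitrary. (The codomain $\RR^k$ is handled by reading $f$ componentwise, or with the sup-norm; this is cosmetic.)

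The step I expect to be the real obstacle --- indeed essentially the only subtlety --- is the very first one: an arbitrary oracle for $x$ might approximate $x$ to within \emph{exactly} $2^{-m_i}$ at some queried argument, leaving no slack to perturb $x$ at all. Choosing $\phi$ with the factor-of-two headroom $\abs{\phi(m)-x} < 2^{-m-1}$ is precisely what makes the queried values $d_i$ reusable as oracle answers for every nearby $y \in S$, and getting that bookkeeping right --- together with the determinism-plus-finiteness induction identifying the two runs --- is the heart of the argument; the rest is routine.
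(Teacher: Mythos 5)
Your argument is correct: the factor-of-two headroom in the chosen oracle, the finiteness of the query set for a halting run, and the determinism induction identifying the runs $M^{\phi}$ and $M^{\psi}$ together give exactly the standard ``use principle'' proof of this fact. The paper itself states the theorem without proof, citing Braverman--Yampolsky, and your argument is essentially the one given in that reference (and in Ko and Weihrauch), so there is nothing to add.
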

Thus step  functions are not computable.\footnote{Braverman \cite{BravermanAMS} has more recent work on how to handle so what he terms {\sl simple  functions} and these include step functions. However, these results do not seem relevant to our problem at hand.} 
That is, $x =0$,   $x=2$,  $>$,  $<$ are not computable in this model.
The sign function 
$$sign(x) = \left\{
              \begin{array}{ll}
                $1$ & \hbox{if $x \ge 0$;} \\
                $0$, & \hbox{if $x <0$.}
              \end{array}
            \right.$$
is not computable.
\begin{theorem} \label{theorem:not}
The GM algorithm cannot be translated to a bit computable algorithm and thus it is not decidable in the bit model.
\end{theorem}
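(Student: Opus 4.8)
The plan is to locate a single step of the GM algorithm that is an honest discontinuous branching test and then invoke the continuity theorem of Braverman--Yampolsky to conclude that no oracle Turing machine can carry out that step. First I would recall from Section~\ref{sec:overviewAlgorithm} that the very first branching in the hyperbolic-hyperbolic case asks whether $\T\,AB^{-1}\le -2$, whether $\T\,AB^{-1}>2$, whether $\T\,AB^{-1}=\pm2$, or whether $-2<\T\,AB^{-1}<2$, and that the answer determines which of the four mutually exclusive conclusions (discrete and free; pass to the Nielsen-equivalent pair $(AB^{-1},B)$ or $(B,AB^{-1})$; go to the parabolic cases; not free or not discrete) the algorithm reaches. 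The function that the algorithm must compute here is, up to an affine change of variable, the sign function $\mathrm{sign}(t)$ evaluated at $t=\T\,AB^{-1}+2$ (and again at $t=\T\,AB^{-1}-2$), because the output jumps between the integers labelling the branches as $\T\,AB^{-1}$ crosses the values $\pm2$.

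The key steps, in order, are: (1) identify the state-of-the-algorithm or output-label map $\Phi$ as a function of the input pair $(A,B)\in SL(2,\RR)^2$, i.e.\ as a function of the eight real matrix entries; (2) exhibit a one-real-parameter family of coherently oriented hyperbolic pairs $(A_s,B_s)$ along which $\T\,A_sB_s^{-1}$ moves continuously through the value $-2$ (for instance by conjugating a fixed pair and sliding the common perpendicular, or by scaling traces), so that $\Phi(A_s,B_s)$ takes at least two distinct integer values and is therefore discontinuous at the crossing parameter; (3) observe that a bit-computable (oracle TM) translation of the GM algorithm would in particular compute $\Phi$ on the set of computable points of this family; (4) apply the Braverman--Yampolsky theorem (stated just above) that any function $\RR^k\to\RR^k$ computable by an oracle TM is continuous, together with the explicit remark in the excerpt that $\mathrm{sign}$, $x=0$, $x=2$, $>$, $<$ are not bit-computable, to reach a contradiction; (5) conclude both halves of the statement --- the algorithm cannot be translated, and since its stopping/output sets are exactly these sign-determined semialgebraic pieces whose indicator is discontinuous, discreteness is not decidable in the bit model.

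The main obstacle I expect is step~(2): one must be careful that the discontinuity survives the restriction to \emph{coherently oriented hyperbolic} pairs and to \emph{computable} real points, and that the jump in $\Phi$ is genuinely forced by the algorithm rather than an artifact of a particular presentation --- in other words, one should argue that \emph{any} correct decision procedure must distinguish $\T\,AB^{-1}<-2$ from $\T\,AB^{-1}=-2$, since by the Geometric Meaning of Negative Trace theorem the former yields a discrete free group while the boundary case $\T\,AB^{-1}=-2$ is a parabolic (cusp) case with different algebraic/geometric behaviour. Once that robustness is in place, the continuity theorem does the rest; no delicate estimate is needed. A secondary, more expository obstacle is to phrase ``the algorithm cannot be translated'' precisely: I would take it to mean that there is a step (a branching function, or equivalently the stopping-set indicator) which is not computable by an oracle TM, which is exactly what the $\mathrm{sign}$ reduction establishes.
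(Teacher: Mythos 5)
Your proposal is correct and takes essentially the same route as the paper: the paper's entire proof (given for Theorem~\ref{theorem:summary}, which subsumes this statement) is that the GM algorithm requires a step that uses a sign function, which is not computable by an oracle Turing machine because of the Braverman--Yampolsky continuity theorem. Your steps (2) and (5) --- exhibiting an explicit family witnessing the discontinuity and arguing that \emph{any} correct decision procedure, not just the GM algorithm's particular branching, must make the discontinuous distinction at $\T\,AB^{-1}=\pm 2$ --- supply rigor for the second half of the claim (``not decidable in the bit model'') that the paper's one-line proof leaves implicit.
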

In \cite{GT} we address this issue as it arises the attempt translate the GM algorithm to a bit-computable one.  We use ideas that include what we term {\sl extended bit-computable domain dependent algorithms} and multi-oracle Turing Machines that do not use a sign oracle. These notions seem appropriate for the algorithms we want to implement as well as for other unrelated algorithms.

\section{$SL(2,\RR)$-decidability: The Teichm{\"u}ller Space is BSS decidable} \label{sec:Tspace}

 The Riemann space or Moduli space is the quotient of the Teichm{\u"}ller modular group acting on the Teichm{\"u}ller space of a surface of finite type. A surface of finite type, type $(g,n)$,  is a surface of genus $g$ with $n$ punctures where $g$ and $n$ are integers.
It is standard to describe $T(g,n)$, the Teichm{\"u}ller space of a group representing a surface $S$ of finite type by real parameters, trace parameters.  The Riemann space can be thought of as the space whose points are  conformal equivalence classes surface. The Teichm{\u"}ller modular  group acts on the Teichm{\"u}ller space yielding Riemann space as its quotient. 
The space, $R(g,n)$, the  rough fundamental domain of \cite{keen},   is an expanded moduli space and  is a fundamental domain for the action of the Mapping-class group,  but it may contain a finite number of points equivalent to a given point in the Teichm{\"u}ller space.




In theorem 6.1 of \cite{keen} the Teichm{\"u}ller space is given by a series of equalities and inequalities on the trace parameters as is the rough fundamental domain for the action of the Teichm{\u"}ller modular group.



It follows that

\begin{theorem} \label{theorem:Tgn} Let $S(g,n)$ be a surface of type $(g,n)$, of genus $g$ with $n$ punctures.  The Teichm{\"u}ller space of $S(g,n)$, $T(g,n)$,   and $R(g,n)$, the   rough fundamental domain for the action of the Teichm{\u"}ller modular group on $T(g,n)$  are BSS-decidable and are computable sets.
\end{theorem}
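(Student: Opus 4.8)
The plan is to reduce Theorem~\ref{theorem:Tgn} to the fact, already established earlier in the paper, that any semialgebraic set is BSS-computable, together with Theorem~6.1 of \cite{keen}. First I would recall precisely what Theorem~6.1 of \cite{keen} provides: an explicit, \emph{finite} list of polynomial equalities and inequalities in the trace parameters (finitely many real coordinates, say $x_1,\dots,x_N$ where $N=N(g,n)$) whose solution set is exactly $T(g,n)$, and a second such finite list cutting out $R(g,n)$. Since the number of parameters and the number of defining relations both depend only on the fixed type $(g,n)$, each of these descriptions is literally a semi-algebraic formula $\phi_{T}(x_1,\dots,x_N)$ and $\phi_{R}(x_1,\dots,x_N)$ in the sense of our Definition, and hence $T(g,n)$ and $R(g,n)$ are semi-algebraic sets in $\RR^N$.

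Next I would invoke the statement ``Any semialgebraic set is computable by a BSS machine'' recorded immediately after the definition of semi-algebraic sets. Concretely, the BSS machine on input $(x_1,\dots,x_N)$ evaluates each polynomial occurring in $\phi_T$ (finitely many arithmetic operations: additions and multiplications over $\RR$, all allowed simple steps of a BSS machine), compares each resulting real number against $0$ (allowed, since a BSS machine may test the sign of and equality of real numbers), and then combines the Boolean outcomes according to the logical connectives appearing in $\phi_T$; it halts outputting $1$ if the combination is true and $0$ otherwise. This is exactly the characteristic function $f_{T(g,n)}$ of our decidability definition, so $T(g,n)$ is BSS-decidable; moreover the machine halts on all inputs, so both $T(g,n)$ and its complement are semidecidable, i.e.\ $T(g,n)$ is a computable set. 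The identical argument with $\phi_R$ in place of $\phi_T$ handles $R(g,n)$.

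The only real content beyond bookkeeping is verifying that the description in \cite[Theorem~6.1]{keen} is genuinely finitary — that is, that it is a \emph{finite} Boolean combination of \emph{polynomial} relations in finitely many trace coordinates, with no quantifiers over infinitely many auxiliary parameters and no transcendental functions. I would therefore spend the bulk of the proof citing and, if needed, lightly unpacking that theorem: listing the trace coordinates used to parametrize $T(g,n)$, and observing that the ``cusp'' or boundary relations there are the usual trace identities (of the form $\operatorname{tr}=\pm 2$, $\operatorname{tr}>2$, and polynomial trace relations among products of generators), all of which are polynomial in the chosen coordinates. I expect this citation-and-unpacking step to be the main (and essentially only) obstacle; everything after it is a mechanical appeal to ``semi-algebraic $\Rightarrow$ BSS-computable.'' One caveat worth flagging in the write-up: $R(g,n)$ is only a \emph{rough} fundamental domain, so the BSS machine decides membership in that particular set $R(g,n)$ as defined in \cite{keen}, not in a strict fundamental domain; this is consistent with the theorem statement and requires no extra work, but should be noted so the reader does not expect more.
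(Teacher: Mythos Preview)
Your proposal is correct and follows essentially the same route as the paper's own proof: cite Theorem~6.1 of \cite{keen} to exhibit $T(g,n)$ and $R(g,n)$ as semialgebraic sets in the trace parameters, then invoke the fact that every semialgebraic set is BSS-computable. The paper's proof is a three-sentence version of exactly this; your additional unpacking (verifying finitariness of Keen's description, spelling out the BSS machine, and the caveat about ``rough'') is more detailed than what the paper provides but changes nothing in substance.
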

%

\begin{proof}
The trace parameters are real parameters for the Teichm{\"u}ller and Riemann Spaces. The inequalities and equalities in the trace parameters of
\cite{keen} Theorem 6.1 exhibit $T(S)$ and the rough $R(S)$ as
semialgebraic sets. By \cite{BSS}  semialgebraic sets are BSS computable.
\end{proof}

\section{Conclusion} \label{sec:conclusion}
We have shown

\begin{theorem} \label{theorem:summary}
The GM algorithm is decidable in the geometric model, the BSS model and the symbolic computation model, it is not decidable in the bit-computable model.  Not every step of the algorithm involves a function that is computable in this model.
\end{theorem}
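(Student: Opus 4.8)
The statement to prove is Theorem~\ref{theorem:summary}, which bundles together four claims: the GM algorithm is decidable in (i) the geometric model, (ii) the BSS model, (iii) the symbolic computation model, and (iv) it is \emph{not} decidable in the bit-computable model, the failure being traceable to a specific non-computable step. My plan is to treat the four claims separately, since each rests on a different prior result already assembled in the excerpt.

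The plan is to dispatch (i)--(iii) by citation and assembly. For the geometric model, I would recall from Section~\ref{sec:geomalg} and the Gilman--Maskit theorems on the geometric meaning of negative trace that every simple step of the GM algorithm is one of: testing disjointness/intersection of a pair of geodesics, testing which of three geodesics separates the other two, and replacing a generating pair by a Nielsen-equivalent pair --- all of these are by definition the allowed simple steps of a geometric algorithm in $\HH^2$, and the termination bound is the explicit finite bound coming from the positive lower bound on trace decrease under a Nielsen transformation (the second main theorem quoted in Section~\ref{sec:Gequiv}). Hence the stopping set is decidable in the geometric model. For the BSS model I would invoke Theorem~\ref{theorem:partI}: each geometric test translates to a finite Boolean combination of polynomial trace (in)equalities --- literally by the ``Geometric Meaning of Negative Trace'' theorem, $\T AB^{-1} < -2$, $> 2$, $= \pm 2$ --- and sign comparison of reals is a BSS-admissible step, with finite-order elliptic detection handled by the oracle; the complexity bound $O(T^2+d)$ certifies halting, so the halting set and its complement are both BSS-semidecidable, i.e.\ BSS-decidable. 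For the symbolic computation model I would cite Theorem~\ref{theorem:partII} (and Jiang's refinement), noting that when the matrix entries lie in a fixed $\QQ(\gamma)$ the trace comparisons become exact algebraic-number comparisons via the representing-polynomial/isolating-interval arithmetic of Section~\ref{sec:symbolcomp}, again with an explicit (exponential, then polynomial after Jiang) complexity bound guaranteeing termination.

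The only substantive content is claim (iv), and here I would simply lean on Theorem~\ref{theorem:not} together with the continuity theorem of Braverman--Yampolsky. The argument: the GM algorithm branches on whether $\T AB^{-1}$ is $<-2$, $=\pm 2$, in $(-2,2)$, or $>2$ --- these branches produce genuinely different outputs (discrete and free vs.\ parabolic cases vs.\ not discrete or not free), so the output, as a function of the continuously-varying input $(A,B)\in SL(2,\RR)^2$, is a step/discontinuous function of the real parameter $\T AB^{-1}$. By the Braverman--Yampolsky theorem every function computable by an oracle Turing machine on a subset of $\RR^k$ is continuous; in particular the sign/equality tests $x=0$, $x=\pm 2$, $x>0$ are not bit-computable. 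Therefore the branching step of the GM algorithm is not computable in the bit model, so the algorithm cannot be translated to a bit-computable one and its stopping set is not bit-decidable. This also delivers the final sentence of the theorem verbatim: the offending step is precisely the trace-comparison branch, which is a discontinuous (step) function and hence not bit-computable.

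I do not expect a genuine obstacle here, since Theorems~\ref{theorem:partI}, \ref{theorem:partII}, \ref{theorem:not} and the continuity theorem do all the work; the proof is an assembly. The one point requiring a line of care is making explicit \emph{why} the branching is discontinuous rather than merely appealing to the sign function in the abstract --- i.e.\ exhibiting an input family, say $A$ fixed hyperbolic and $B=B(t)$ varying so that $\T A B(t)^{-1}$ crosses $-2$ transversally, across which the algorithm's verdict jumps from ``discrete and free'' to one of the other cases --- so that the non-computable step is not an artifact of a clumsy implementation but intrinsic to any correct implementation of the GM decision procedure. With that family in hand the discontinuity, and hence the appeal to Braverman--Yampolsky, is immediate.
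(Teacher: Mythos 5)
Your proposal is correct and follows essentially the same route as the paper: parts (i)--(iii) are assembled by citation to the earlier decidability and complexity results, and part (iv) reduces to the observation that the GM algorithm's trace-comparison branching requires a sign/equality test, which is discontinuous and hence not computable by an oracle Turing machine by the Braverman--Yampolsky continuity theorem. In fact the paper's own proof is a single sentence ("The GM algorithm requires a step that uses a sign function"), so your additional care in arguing that the discontinuity is intrinsic to any correct implementation --- via a family $B(t)$ crossing the branch locus --- is a worthwhile strengthening rather than a deviation.
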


\begin{proof}
The GM algorithms requires a step that uses a sign function.
\end{proof}

This proves our main theorem, Theorem \ref{theorem:notbothplus}.


\section{The Overriding Question} \label{sec:questions}

\medskip
\vskip .05in
{\centerline{\bf {\it What is the appropriate computational model?}}}
\medskip
\vskip .05in

More specifically one can ask

\vskip .05in
\medskip

{\bf Question} 1. Is the question of whether or not an elliptic element of $PSL(2,{\mathbb{C}})$ is of finite order  decidable in some modified bit model?
\medskip

{\bf Question} 2. Is the Problem: determine whether $\arccos x$ is a rational multiple of $\pi$ decidable?

\medskip


{\sl Decidable} $\implies$ halting set is semi-algebraic.

\medskip

Kapovich's theorem, that $PSL(2,\mathbb{C})$ is not decidable in the BSS model depends upon the fact that the Maskit slice is not because cusps which correspond to trace $2$ are dense in the boundary. The Maskit slice in the character variety of the punctured torus is the set where  $w \in  \mathbb{C}$ which has a discrete image for  the group  generated by the  matrices
\vskip .05in
\centerline{ $\left(\begin{array}{cc}
    $w$ & $1$ \\
    $1$   & $0$ \\
  \end{array}\right)$,
  $\left(
  \begin{array}{cc}
    $1$ & $2$ \\
    $0$ & $1$ \\
  \end{array}
\right).$ }}

\medskip


{\bf Question:} 3. Are there interesting subsets of the finitely generated subgroups of $PSL(2,\mathbb{C})$ that are and are not decidable other than the Maskit slice?

\medskip
\vskip .05in

{\bf Question:} 4. Is there a way to incorporate $\T = \pm 2$ into a modified bit-computation model?



\section{Acknowledgement}

The author thanks the referees for multiple helpful comments.

\end{document}